\newtheorem*{theoremA}{Theorem A}
\newtheorem*{theoremB}{Theorem B}
\newtheorem*{theoremC}{Theorem C}
\newtheorem*{theoremD}{Theorem D}
\newtheorem{theorem}{Theorem}[section]
\newtheorem{lemma}[theorem]{Lemma}
\newtheorem{corollary}[theorem]{Corollary}
\newtheorem{proposition}[theorem]{Proposition}
\newtheorem{remark}[theorem]{Remark}
\newtheorem{example}[theorem]{Example}
\newtheorem{definition}[theorem]{Definition}
\numberwithin{equation}{section}
\begin{document}

\title{On specification and measure expansiveness}
\author{Welington Cordeiro$^{a,b}$, Manfred Denker$^{b}$ and Xuan Zhang$^{b}$
\\ (a) Universidade Federal do Rio de Janeiro \\ 
and\\
(b)The  Pennsylvania State University}

\date{Aug 29, 2017}

\maketitle

\begin{abstract}  We relate the local specification and periodic shadowing properties. We also clarify the relation between local weak specification and local specification if the system is measure expansive.  The notion of strong measure expansiveness is introduced, and an example of a non-expansive systems with the strong measure expansive property is given.  Moreover, we find a family of examples with the $N$-expansive property, which are not strong measure expansive. We finally show a spectral decomposition theorem for strong measure expansive dynamical systems with shadowing.   
\end{abstract}

        
 \section{Introduction}\label{sec:intro} The theory of hyperbolic dynamical systems relies heavily on Anosov's closing lemma (\cite{An}) and the notion of specification as termed in Bowen's work in 1971 (\cite{Bo}). The abstract tracing property of finitely many  finite orbits by periodic points can be used to derive many interesting properties on the asymptotic behavior of such systems. In particular, Senti and two of the present authors (\cite{DSZ}) used such structures to prove fluctuation laws of ergodic sums over periodic orbits, which can be expressed in terms of  asymptotic normal distributions.

 The most common terminology originating from Anosov's closing lemma is, however, the notion of shadowing of pseudo-orbits. This raises the question of putting the notions used in \cite{DSZ} into this framework. Here we add the missing results which are not found in the literature. The common goal  of specification and shadowing  is to find a true trajectory near an approximate one, they only differ in understanding what an approximate trajectory should be. For specification one follows arbitrarily assembled finite pieces of orbits by a true orbit while for shadowing one traces a pseudo-orbit.

There are many generalizations of shadowing in the literature. Here we will consider limit shadowing and Lipschitz shadowing. Limit shadowing first appears in \cite{ENP}. From the numerical viewpoint it means that if we apply a numerical method that approximates the system with ``improving accuracy'' so that one-step errors tend to zero as time goes to infinity, then the numerically calculated orbits tend to real ones. The concept of Lipschitz shadowing was introduced in \cite{P}. It exists in many systems, for example Anosov systems share this property.

Both notions can be formulated in terms of approximating orbits being periodic or not periodic. 
 While in the non-periodic case, we prove in Theorem A that shadowing and specification properties are equivalent, we found three different definitions of periodic shadowing in the literature (see \cite{COP}, \cite{K} and \cite{OPT}).  It turns out that for expansive transformations one derives the existence of periodic points leading to equivalent statements for periodic shadowing and specification. Hence we are lead to study the equivalence under weaker assumptions than expansiveness. In this paper this is done using measure expansiveness.

The first research which considered expansiveness in dynamical systems was by Utz \cite{U}. In \cite{M}, Morales presented the interesting notion of $N$-expansive systems. Recently this notion has gained some attention, see for example \cite{A}, \cite{APV}, \cite{CC}. Furthermore, in \cite{MS} Morales and Sirvent introduced the notion of measure expansive property.

In this paper we define the notion of strong measure expansive system and in Theorems B and C we prove that for a transitive and strong measure expansive homeomorphism, the notions of shadowing and specification are equivalent. We explore the difference between the notions of $N$-expansiveness and strong measure expansiveness in Examples \ref{myex} and \ref{artiguex}. Example \ref{myex} was constructed by Carvalho and the first author in \cite{CC}, which shows that a $2$-expansive homeomorphism is not necessarily strong measure expansive. On the other hand, we show  in Example \ref{artiguex} that strong measure expansiveness does not necessarily imply expansiveness. The example was first shown by Artigue in \cite{A} and was used to explain that for each $N\in\mathbb{N}$ there is an $(N+1)$-expansive $C^N$-diffeomorphism which is not $N$-expansive.

In the final part, we extend the classical spectral decomposition theorem (see \cite{S, B}) for strong measure expansive homeomorphisms with the shadowing property in Theorem D. 
In particular, the theorem applies to Example \ref{artiguex}. However, Example \ref{myex} does not have a spectral decomposition hence $2$-expansiveness is not sufficient for a spectral decomposition theorem.

The current paper is organized as follows. In Section $2$ we collect some basic definitions of shadowing and specification. In Sections $3$ we treat the non-periodic case, in particular we prove that local weak specification is equivalent to shadowing, and we find several equivalent properties for the generalized definitions of shadowing. We also discuss the relation between local specification and various definitions of periodic shadowing. In Section $4$ we introduce the notion of strong measure expansiveness, and give some sufficient conditions for the equivalence between shadowing, periodic shadowing and local specification. Finally, in Section $5$ we prove a Spectral Decomposition Theorem for strong measure expansive systems with shadowing. The main results are Theorems A, B, C and D.

\section{Shadowing, specification and expansiveness: definitions}\label{sec:definitions}

We fix a  topological dynamical systems $(X,f)$, where $X$ is a compact metric space (unless stated otherwise)  with metric $d$ and $f:X\to X$ is a continuous map. Thus $\mathbb N=\{0,1,2,\ldots\}$ acts on $X$ via $f$, or in case $f$ is a homeomorphism $\mathbb Z$ acts on $X$. In order to unify notation we use $T$ to denote either $\mathbb N$ or $\mathbb Z$, depending on the nature of the action. Throughout the paper $(X,f)$ stands for such a dynamical system. $x$ or $y$ with or without sub- or superscripts will always denote points in $X$. We write $\Omega(f)$ for the set of non-wandering points of $f$ and write $\mbox{Per}_n(f)$ for the set of periodic points of $f$ with period $n$, but not necessarily its prime period, and $\mbox{Per}(f)$ for the set of all periodic points of $f$.  We also let $B(x,\epsilon)$ denote the open ball with center $x$ of radius $\epsilon>0$.

Recall the definition of \emph{stable} and \emph{unstable} sets for homeomorphisms $f$. For $x\in X$ and $\epsilon>0$ define
\begin{eqnarray*}
W^s_\epsilon(x,f)&=&\{y\in X;d(f^i(x),f^i(y))\le \epsilon, \ \forall i\geq 0\},\\
W^u_\epsilon(x,f)&=&\{y\in X;d(f^i(x),f^i(y))\le\epsilon, \ \forall i\leq 0\},\\
W^s(x,f)&=&\{y\in X;d(f^i(x),f^i(y))\rightarrow 0, \ i\rightarrow +\infty \},\\
W^u(x,f)&=&\{y\in X;d(f^i(x),f^i(y))\rightarrow 0, \ i\rightarrow -\infty \}.
\end{eqnarray*}
Also, for $x\in X$ and $\delta>0$ define $\Gamma_\delta(x,f)=W^s_\delta(x,f)\cap W^u_\delta(x,f)$. Note that $\Gamma_\delta(x,f)$ is a closed set. We will omit $f$ in the definitions when it is clear from the context.

For completeness we recall the definitions of \emph{shadowing} and related notions.

\begin{definition}\label{def:pseudo} {\rm [Pseudo orbit]}  A sequence $\{x_i\}_{i\in T}$ is called a $\delta$-\emph{pseudo orbit} if $d(f(x_i),x_{i+1})<\delta$ for each $i\in T$. A $\delta$-pseudo orbit $\{x_i\}_{i\in T}$ is \emph{periodic} if there is $n\in\mathbb{N}$ such that $x_i=x_{i+n}$ for each $i\in T$.  We say that $x\in X$ $\epsilon$-\emph{shadows} a $\delta$-pseudo orbit $\{x_i\}_{i\in T}$ if $d(f^{i}(x),x_i)<\epsilon$ for each $i\in T$.
\end{definition}

\begin{definition}\label{def:shadowing}{\rm [Shadowing properties] }  \newline
1. $f$ has the \emph{shadowing property} if for all $\epsilon>0$ there is $\delta>0$ such that if $\{x_i\}_{i\in T}$ is a $\delta$-pseudo orbit, then there is $x\in X$ which $\epsilon$-shadows $\{x_i\}_{i\in T}$.\newline
2. $f$ has the \emph{strong periodic shadowing} property if for all $\epsilon>0$ there is $\delta>0$ such that if $\{x_i\}_{i\in T}$ is a periodic $\delta$-pseudo orbit with period $N$, then there is $x\in \mbox{Per}_{N}(f)$ which $\epsilon$-shadows $\{x_i\}_{i\in T}$.
\newline
3.  (see \cite{OPT}) $f$ has the \emph{periodic shadowing} property if for all $\epsilon>0$ there is $\delta>0$ such that if $\{x_i\}_{i\in T}$ is a periodic $\delta$-pseudo orbit, then there is $x\in \mbox{Per}(f)$ which $\epsilon$-shadows $\{x_i\}_{i\in T}$.
\newline
4.  $f$ has the \emph{special periodic shadowing} property if it has the shadowing property and the periodic shadowing property.
\end{definition}

\begin{remark} In literature some authors use the term `periodic shadowing' to denote  `special periodic shadowing' as introduced in the previous definition, see for example \cite{K}. We use the above definitions to distinguish the different notions of periodic shadowing used in the literature.
\end{remark}

\begin{definition}\label{def:LimShadowing}{\rm [Generalized shadowing properties] } \newline
1.  (\cite{CK} and \cite{Sakai})  $f$ has the \emph{limit shadowing} property if for each sequence $\{x_i\}_{i\in\mathbb{N}}$ such that $\lim_{i\rightarrow\infty}d(f(x_i),x_{i+1})=0$,  there is $x\in X$ such that $$\lim_{i\rightarrow\infty}d(f^i(x),x_i)=0.$$
\newline
2.  (\cite{PT} and \cite{Sakai}). $f$ has the \emph{Lipschitz shadowing} property if there are constants $L,d_0>0$ such that if $\{x_i\}_{i\in T}$ is a sequence satisfying $d(f(x_i),x_{i+1})<d\leq d_0$, for every $i\in T$, then there is $x\in X$ such that $d(f^i(x),x_i)<Ld$, for every $i\in T$.
\newline
3. (\cite{CK})  A homeomorphism $f$ has the \emph{two-sided limit shadowing} property if for each sequence $\{x_i\}_{i\in\mathbb{Z}}$ such that $\lim_{i\rightarrow\pm\infty}d(f(x_i),x_{i+1})=0$, then there is $x\in X$ such that $\lim_{i\rightarrow\pm\infty}d(f^i(x),x_i)=0$.
\end{definition}

\begin{definition}\label{def:locspec} {\rm [Specification properties] } \newline
1. (\cite{Bo})  $f$ has the \emph{local specification} property if for all $\epsilon>0$ there are $N\in\mathbb{N}$ and $\delta>0$ such that if $k\in \mathbb N$ and  $x_1,...,x_k\in X$ satisfy
$d(f^n(x_i),x_{i+1})<\delta,$
with $N\leq n$, and $x_{k+1}=x_1$, then there is $x\in Per_{kn}(f)$ such that
$$d(f^{j}(x),f^{j-in}x_{i+1}) <\epsilon,$$
for all  $i\in\{0,1,..., k-1\} $  and   $in\leq j<(i+1)n.$\newline
2.  $f$ has the \emph{local weak specification} property if for all $\epsilon>0$ there are $N\in\mathbb{N}$ and $\delta>0$ such that if $k\in \mathbb N$ and $x_1,...,x_k\in X$ satisfy
$d(f^n(x_i),x_{i+1})<\delta,$
with $N\leq n$, then there is $x\in X$ such that
$$d(f^{j}(x),f^{j-in}x_{i+1})<\epsilon,$$
for all $ i\in\{0,1,..., k-1\} \text{ and }\ in\leq j<(i+1)n.$
\end{definition}

\begin{definition}\label{def:LimSpecification}{\rm [Generalized specification properties]}
\newline
1.  $f$ has the \emph{local  limit specification} property if there is $N\in\mathbb{N}$ such that for each sequence $\{x_i\}_{i\in\mathbb{N}}$ with $\lim_{i\rightarrow\infty}d(f^n(x_i),x_{i+1})=0$ for some $n\geq N$, there is $x\in X$ such that $$\lim_{i\rightarrow\infty}\sup_{ 0\leq j\leq n}d(f^{j+in}(x),f^j(x_i))=0.$$
\newline
2.   $f$ has the \emph{local Lipschitz specification} property if there are constants $L,d_0>0$ such that, for each $0<d\leq d_0$ there is $N\in\mathbb{N}$ such that if $\{x_i\}_{i\in T}$ is a sequence such that $d(f^n(x_i),x_{i+1})<d\leq d_0,$ for every $i\in T$, for some $n\geq N$, then there is $x\in X$ such that $$d(f^{in+j}(x),f^j(x_i))<Ld$$
 for every $i\in T$  and $0\le j< n$.
 \newline
 3. A homeomorphism $f$ has the \emph{two-sided limit specification} property if there is $N\in\mathbb{N}$ such that, for each sequence $\{x_i\}_{i\in\mathbb{Z}}$ such that $$\lim_{i\rightarrow\pm\infty}d(f^n(x_i),x_{i+1})=0$$ for some $n\geq N$, then there is $x\in X$ such that $$\lim_{i\rightarrow\pm\infty}\sup_{0<j\leq n}d(f^{i+j}(x),f^j(x_i))=0.$$
\end{definition}

Note that the specification here is in fact a weak specification as in Definition \ref{def:locspec}. The corresponding definition using periodic points obviously does not make much sense.

\begin{definition}{\rm [Measure expansive]}
\newline
1.  (\cite{MS})  A homeomorphims $f$ is called \emph{measure expansive} if there is $\delta>0$ such that for every invariant non-atomic probability  measure $\mu$ and $x\in{X}$ we have $\mu(\Gamma_\delta(x))=0$.
\newline
2. A homeomorphims $f$ is called \emph{strongly measure expansive} if there is $\delta>0$ such that for every invariant probability measure $\mu$ and $x\in X$ we have $\mu(\Gamma_\delta(x))=\mu({x})$.
\end{definition}

\begin{definition}{\rm [$n$-expansive] (\cite{M})} Let $n\in\mathbb{N}$. A homeomorphism $f$ is called \emph{$n$-expansive} if there is $\delta>0$ such that,  for every $x\in X$, $\Gamma_\delta(x)$ has at most $n$ elements.
\end{definition}

\section{Specification and shadowing}\label{sec:specshad}

\subsection{Local weak specification and shadowing properties}\label{sec:locspec}
In this subsection we  prove that local weak specification is equivalent to shadowing.

\begin{proposition}\label{prop:3.2} $f$ has the local weak specification property if, and only if, for all $\epsilon>0$ there are $N\in \mathbb N$ and $\delta>0$ such that if $\{x_i\}_{i\in T}$ satisfies
$d(f^n(x_i),x_{i+1})<\delta,$
with $N\leq n$, then there is $x\in X$ such that
$$d(f^{j+in}(x),f^{j}(x_i))<\epsilon$$
for every $i\in T$ and $0\le j<n$.
\end{proposition}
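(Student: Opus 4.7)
The proof splits into a cheap direction and a compactness extraction, with the latter requiring a shift trick in the two-sided case.

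For ($\Leftarrow$), I would promote any finite segment $x_1,\dots,x_k$ satisfying $d(f^n(x_i),x_{i+1})<\delta$ to an infinite pseudo-orbit indexed by $T$ by padding with iterates: set $x_{k+\ell}:=f^{\ell n}(x_k)$ for $\ell\ge 1$ and, in the homeomorphism case $T=\mathbb Z$, also $x_{-\ell}:=f^{-\ell n}(x_1)$. Each added gap equals $0$, so the hypothesis of the infinite statement holds with the same $\delta$, and the resulting tracking point restricted to the indices $\{0,1,\dots,k-1\}$ recovers exactly the local weak specification conclusion for the original block.

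For ($\Rightarrow$), the plan is a compactness/extraction argument. Apply local weak specification with error $\epsilon/2$ to obtain constants $N$ and $\delta$. Assume first $T=\mathbb N$. Given $\{x_i\}_{i\in\mathbb N}$ with $d(f^n(x_i),x_{i+1})<\delta$ and $n\ge N$, for each $k\ge 1$ apply local weak specification to the initial segment $x_0,\dots,x_{k-1}$ to obtain $z_k\in X$ with $d(f^{in+j}(z_k),f^j(x_i))<\epsilon/2$ for $i=0,\dots,k-1$ and $0\le j<n$. Extract a convergent subsequence $z_{k_r}\to z^*$ by compactness of $X$; continuity of each iterate $f^{in+j}$ transfers the bound to $z^*$ as $\le\epsilon/2<\epsilon$, which is precisely the desired infinite tracking.

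The two-sided case $T=\mathbb Z$ requires a recentering trick. Apply local weak specification to the shifted finite block $y_j:=x_{j-k}$, $j=0,\dots,2k$, producing $\tilde z_k\in X$, and then set $z_k:=f^{kn}(\tilde z_k)$. The identity $f^{mn+j}(z_k)=f^{(m+k)n+j}(\tilde z_k)$, which is valid for $m\ge -k$ and meaningful for negative $m$ because $f$ is a homeomorphism, yields
\[
d(f^{mn+j}(z_k),f^j(x_m))<\epsilon/2,\qquad m\in\{-k,\dots,k\},\ 0\le j<n.
\]
A second compactness extraction and a continuity limit finish the proof. The main obstacle I anticipate is bookkeeping the recentering so that a single limit point $z^*$ tracks the entire bi-infinite sequence; the minor strict-versus-non-strict inequality issue at the limit is absorbed by working with $\epsilon/2$ from the start.
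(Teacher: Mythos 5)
Your proof is correct and follows essentially the same route as the paper's: apply local weak specification to longer and longer finite blocks and pass to an accumulation point by compactness. You are in fact somewhat more careful than the paper, which works with $\epsilon$ rather than $\epsilon/2$ (so the limit only yields $\le\epsilon$), glosses over the recentering needed when $T=\mathbb{Z}$, and leaves the easy converse implicit.
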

\begin{proof} Let $\epsilon>0$ be given and take $N\in\mathbb{N}$ and $\delta>0$ given by the local weak specification property. If $\{x_i\}_{i\in T}$ satisfies
$d(f^n(x_i),x_{i+1})<\delta,$
then, for each $k\in\mathbb{N}$, there is $x^k$ such that
$$d(f^{j+ in}(x^k),f^{j}(x_i))<\epsilon, \qquad 0\le |i| < k.$$
Since $X$ is compact and $f$ is continuous, any accumulation point $x$ of $\{x^k: k\ge 1\}$ satisfies
$$d(f^{j+in}(x),f^{j}(x_i))<\epsilon, \qquad i\in  T.$$
The proof is complete.
\end{proof}

\begin{theorem}\label{equiv}  $f$ has the local weak specification property if, and only if, $f$ has the shadowing property.
\end{theorem}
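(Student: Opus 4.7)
The plan is to prove both implications using the equivalent form of local weak specification established in Proposition \ref{prop:3.2}.

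For the forward direction (shadowing implies local weak specification), given $\epsilon>0$ let $\delta>0$ be provided by the shadowing property. Fix any $N\geq 1$, and suppose $\{x_i\}_{i\in T}$ and $n\geq N$ satisfy $d(f^n(x_i),x_{i+1})<\delta$. I would assemble the concatenated sequence $y_{in+j} := f^j(x_i)$ for $i\in T$ and $0\le j< n$. Its successive one-step jumps vanish inside each block and equal $d(f^n(x_i),x_{i+1})<\delta$ at the block boundaries, hence $\{y_k\}$ is a $\delta$-pseudo orbit. Shadowing furnishes $x\in X$ with $d(f^{in+j}(x),f^j(x_i))<\epsilon$ for all $i\in T$ and $0\le j<n$, which is precisely the conclusion of Proposition \ref{prop:3.2}.

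For the converse (local weak specification implies shadowing), the idea is to subsample a pseudo orbit into blocks of length $N$ and then apply the specification condition block-by-block. Given $\epsilon>0$, choose $N\in\mathbb{N}$ and $\delta_1>0$ corresponding to tolerance $\epsilon/2$ in the equivalent form of Proposition \ref{prop:3.2}. By uniform continuity of $f,f^2,\ldots,f^{N}$ on the compact space $X$, one can pick $\delta>0$ so small that any $\delta$-pseudo orbit $\{x_i\}_{i\in T}$ satisfies
$$d(f^k(x_i),x_{i+k})<\min\{\epsilon/2,\,\delta_1\}\qquad\text{for every } i\in T,\ 0\le k\le N;$$
this follows by the iterated triangle inequality $d(f^k(x_i),x_{i+k})\le d(f^k(x_i),f^{k-1}(x_{i+1}))+d(f^{k-1}(x_{i+1}),x_{i+k})$ and induction on $k$, combined with the modulus of continuity of $f^{k-1}$.

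Setting $y_i:=x_{iN}$, the bound $d(f^N(y_i),y_{i+1})=d(f^N(x_{iN}),x_{(i+1)N})<\delta_1$ allows me to invoke the equivalent form of local weak specification with $n=N$, obtaining $x\in X$ with $d(f^{iN+j}(x),f^j(y_i))<\epsilon/2$ for all $i\in T$ and $0\le j<N$. Combining with the mid-block estimate gives, for every such $i,j$,
$$d(f^{iN+j}(x),x_{iN+j})\le d(f^{iN+j}(x),f^j(x_{iN}))+d(f^j(x_{iN}),x_{iN+j})<\tfrac{\epsilon}{2}+\tfrac{\epsilon}{2}=\epsilon,$$
which is the shadowing conclusion. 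The main technical obstacle is the uniform continuity bookkeeping: $\delta$ must be chosen small enough simultaneously so that the subsampled block-endpoint jumps lie below the threshold $\delta_1$ demanded by specification and so that the mid-block approximations lie within $\epsilon/2$, so that the triangle inequality assembles a genuine $\epsilon$-shadowing orbit from the orbit tracing the block-endpoints.
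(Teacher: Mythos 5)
Your proof is correct and follows essentially the same route as the paper: the forward direction by concatenating the blocks $f^j(x_i)$ into a single $\delta$-pseudo orbit (which the paper compresses into "shadowing implies local weak specification with $N=1$"), and the converse by subsampling the pseudo orbit at multiples of $N$, controlling both the block-endpoint jumps and the mid-block drift via uniform continuity of $f,\dots,f^N$, exactly as in the paper's telescoping estimate with $N\eta<\delta$. The only cosmetic difference is that you route both directions explicitly through the equivalent formulation of Proposition \ref{prop:3.2}, which the paper uses implicitly.
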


\begin{proof} By definition shadowing implies local weak specification with $N=1$ for every $\epsilon>0$.

Assume now that $f$ has the local weak specification property.  Let $\epsilon>0$ be given and choose $N\in\mathbb{N}$ and $\frac{\epsilon}{2}>\delta>0$ as in the  local weak specification property for $\epsilon/2$. Let $\eta>0$ be such that $$N\eta<\delta.$$
Pick $\delta_1>0$ such that if $d(x,y)<\delta_1$, then $d(f^i(x),f^i(y))<\eta$, for every $0\leq |i|\leq N$. Let $\{x_i\}_{i\in T}$ such that $d(f(x_i),x_{i+1})<\delta_1$. Define the sequence $y_i=x_{Ni}$ for $i\in T$. Then
\begin{eqnarray*} d(f^N(y_i),y_{i+1})&=&d(f^N(x_{Ni}),x_{N(i+1)}) \\
&\leq&\Sigma_{j=0}^{N-1} d(f^{N-j}(x_{Ni+j}),f^{N-j-1}(x_{Ni+j+1})) \\
&\leq&N\eta<\delta.
\end{eqnarray*}
By local weak specification there is $x\in X$ such that
$$d(f^{j}(x),f^{j-iN}(y_i))<\frac{\epsilon}{2}, \qquad i\in\{0,1,..\} \text{ and } iN\leq j<(i+1)N.$$
Therefore, if $i\in\{0,1,..\}$ and $iN\leq j<(i+1)N$,
\begin{eqnarray*} d(f^{j}(x),x_j)&\leq& d(f^{j}(x),f^{j-iN}(x_{Ni}))+\Sigma_{r=iN}^{j-1}d(f^{j-r}(x_r),f^{j-r-1}(x_{r+1})) \\
 &\leq&d(f^{j}(x),f^{j-iN}(y_{i}))+(j-Ni)\eta \\
&\leq&\frac{\epsilon}{2}+N\eta<\epsilon.
\end{eqnarray*}
This concludes the proof.
\end{proof}

\begin{theorem}\label{equiv2} Let $f:X\rightarrow X$ be a uniformly continuous function where $X$ is a locally compact metric space. Then $f$ has the local weak specification property if, and only if, $f$ has the shadowing property.
\end{theorem}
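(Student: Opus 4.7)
The plan is to replay the proof of Theorem \ref{equiv} step by step, with the global compactness of $X$ replaced, in its one essential use, by local compactness at the relevant base point combined with the hypothesized uniform continuity of $f$.

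First I would adapt Proposition \ref{prop:3.2} to this setting. Given $\{x_i\}_{i\in T}$ with $d(f^n(x_i),x_{i+1})<\delta$ and $n\ge N$, the finite form of local weak specification produces, for each $k\ge 1$, an approximation $x^k$ with $d(f^{j+in}(x^k),f^j(x_i))<\epsilon$ for $|i|<k$ and $0\le j<n$. Taking $i=j=0$ pins $x^k\in \overline{B(x_0,\epsilon)}$. Local compactness at $x_0$ provides a compact neighborhood of $x_0$, so for $\epsilon$ small enough this closed ball is compact; a convergent subsequence $x^{k_m}\to x\in X$ therefore exists, and continuity of $f$ propagates every tracking inequality to the limit, giving the infinite form.

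With this in hand, the argument of Theorem \ref{equiv} transplants verbatim. Given $\epsilon>0$, pick $N$ and $\delta_0$ from local weak specification for $\epsilon/2$, choose $\eta>0$ with $N\eta<\delta_0$, and now invoke the \emph{assumed} uniform continuity of $f$ on $X$ to pick $\delta_1>0$ such that $d(x,y)<\delta_1$ implies $d(f^i(x),f^i(y))<\eta$ for $0\le i\le N$. For a $\delta_1$-pseudo orbit $\{x_i\}$, the subsampled sequence $y_i:=x_{Ni}$ satisfies $d(f^N(y_i),y_{i+1})<\delta_0$ by the same telescoping estimate as before; the adapted Proposition \ref{prop:3.2} furnishes a tracking point $x$, and the triangle inequality used in Theorem \ref{equiv} closes the bound at $\epsilon$. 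The reverse implication, shadowing implying local weak specification, is unchanged from the compact case with $N=1$.

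The main obstacle is the accumulation-point step. Global compactness of $X$ is no longer available, and the remedy is that the candidates $x^k$ are confined to a neighborhood of $x_0$, so only local compactness at $x_0$ is required; this forces us to treat only sufficiently small $\epsilon$, but that suffices because shadowing at a smaller $\epsilon$ implies shadowing at any larger one (the same witness $\delta$ works). The uniform continuity of $f$ takes over the role that compactness of $X$ previously played in supplying uniform continuity of finite iterates on all of $X$, and is indispensable for producing a single $\delta_1$ valid independently of where in $X$ the pseudo orbit happens to live.
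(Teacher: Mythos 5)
Your overall strategy is the natural one, and it is in fact all the paper itself offers: the printed text for Theorem \ref{equiv2} says only that the argument is ``similar to the previous theorem'' and leaves it to the reader. You correctly locate the two places where compactness of $X$ was used in Proposition \ref{prop:3.2} and Theorem \ref{equiv}: once to get a single $\delta_1$ with $d(x,y)<\delta_1\Rightarrow d(f^i(x),f^i(y))<\eta$ for $0\le i\le N$ (which the hypothesis of uniform continuity of $f$ replaces, since compositions of uniformly continuous maps are uniformly continuous), and once to extract an accumulation point of the approximating sequence $x^k$. The first replacement, and the telescoping and triangle-inequality estimates, are fine.

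The accumulation step is where your argument has a genuine gap. Local compactness gives, for each point $x$, some radius $r(x)>0$ with $\overline{B(x,r(x))}$ compact, but $r(x)$ depends on $x$ and need not be bounded below over $X$. In the shadowing property the tolerance $\epsilon$ is quantified \emph{before} the pseudo orbit, hence before its initial point $x_0$ is known; so ``take $\epsilon$ small enough that $\overline{B(x_0,\epsilon)}$ is compact'' is not a legitimate reduction, because for a fixed $\epsilon>0$ there may be base points $x_0$ whose closed $\epsilon$-ball is not compact. Already in the locally compact space $X=\{1/n:\ n\ge 1\}\subset\mathbb{R}$, for every $\epsilon>0$ the closed $\epsilon$-ball around any $1/n$ with $1/n<\epsilon$ contains the infinite closed discrete set $\{1/m:\ m\ge n\}$ and hence is not compact. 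Your remark that shadowing at a smaller $\epsilon$ implies it at a larger one does not repair this, since there is no single small $\epsilon$ valid for all $x_0$, and you cannot rerun the finite specification step at an $x_0$-dependent tolerance without also shrinking $\delta$ after the pseudo orbit has been handed to you. To close the gap you need either a uniform form of local compactness (e.g.\ a proper metric, or a uniform lower bound on the radius of compact balls), or a different device for producing the limit point, such as a Cauchy-sequence argument, which would in turn require completeness together with some uniqueness of the tracing points. As written, the passage from the finite to the infinite tracking statement can fail.
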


The proof of this theorem is similar to the previous theorem and we leave the proof to the reader.

\subsection{Generalized shadowing and local weak specification}\label{sec:speci}

There are several generalizations of the notion of shadowing as explained in Section \ref{sec:definitions}. Here we shall discuss three of them  and their corresponding versions for local specification. When $f$  is a homeomorphism, we call it Lipschitz if $f$ and $f^{-1}$ are both Lipschitz.

\begin{theoremA}\label{A}
\begin{enumerate}
\item $f$ has the limit shadowing property if, and only if, $f$ has the local limit specification property.
\item If $f:X\rightarrow X$ is a Lipschitz map, then $f$ has the Lipschtiz shadowing property if, and only if, $f$ has the local Lipschitz specification property.
\item $f$ has the two-sided limit shadowing property if, and only if, $f$ has the local two-sided limit specification property.
\end{enumerate}
\end{theoremA}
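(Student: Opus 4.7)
The plan is to follow the template of Theorem~\ref{equiv}: interpolation for shadowing $\Rightarrow$ specification, and an $f^N$-blocking trick for the converse, with appropriate bookkeeping adapted to each of the three asymptotic regimes.

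For each part, the direction shadowing $\Rightarrow$ specification is handled by interpolation. Given $\{x_i\}$ with the requisite smallness on $d(f^n(x_i),x_{i+1})$, I set
\[
y_{in+j}:=f^j(x_i),\qquad 0\le j<n,
\]
so that $d(f(y_k),y_{k+1})$ vanishes except at the junction indices $k=(i+1)n-1$, where it equals $d(f^n(x_i),x_{i+1})$. Applying the relevant shadowing hypothesis to $\{y_k\}$ yields $x$ for which $d(f^k(x),y_k)=d(f^{in+j}(x),f^j(x_i))$ is small in the target sense. In parts 1 and 3 this immediately gives local (two-sided) limit specification with $N=1$; in part 2 it gives local Lipschitz specification with the same $(L,d_0)$ and $N=1$. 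The endpoint $j=n$ in the sup appearing in the limit specification is absorbed by one triangle inequality through $x_{i+1}$.

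For the converse direction I mimic the blocking argument of Theorem~\ref{equiv}. Given $\{x_i\}$ whose one-step errors $d(f(x_i),x_{i+1})$ are small in the relevant sense, let $N$ be provided by the specification hypothesis and set $y_i:=x_{iN}$. The telescoping bound
\[
d(f^N(y_i),y_{i+1})\le\sum_{j=0}^{N-1}d\bigl(f^{N-j-1}(f(x_{iN+j})),\,f^{N-j-1}(x_{iN+j+1})\bigr)
\]
converts smallness of the consecutive errors of $\{x_i\}$ into smallness of the $f^N$-errors of $\{y_i\}$: in parts 1 and 3 I use uniform continuity of $f^{N-j-1}$ on the compact space $X$, while in part 2 I use $\mathrm{Lip}(f^{N-j-1})\le L_f^{N-j-1}$ so that the total amplification is $C_N:=\sum_{k=0}^{N-1}L_f^k$. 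Applying the specification property to $\{y_i\}$ with $n=N$ then furnishes $x$ with $d(f^{iN+j}(x),f^j(x_{iN}))$ controlled, and a parallel telescoping bound for $d(f^j(x_{iN}),x_{iN+j})$ plus the triangle inequality yields the desired estimate on $d(f^k(x),x_k)$.

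The main obstacle lies in part 2. The amplification $C_N$ is tied to $N$, while $N$ in the local Lipschitz specification is allowed to depend on the admissible error $d$. I plan to fix $N$ as the threshold supplied by the specification at $d=d_0$, shrink the admissible pseudo-orbit error threshold to $d_0'=d_0/C_N$ so that the blocked sequence $\{y_i\}$ falls inside the domain where the specification applies, and verify that the resulting Lipschitz shadowing constants $L'=(L+1)C_N$ and $d_0'=d_0/C_N$ are internally consistent. Parts 1 and 3 avoid this quantitative bookkeeping entirely, since uniform continuity on the compact space $X$ is qualitative; for part 3 the only extra observation is that the blocking construction treats the tails $i\to\pm\infty$ symmetrically, so both one-sided convergences transfer through the argument without interference.
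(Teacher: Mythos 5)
Your proposal follows the paper's proof essentially step for step: the shadowing-to-specification direction is the interpolation/$N=1$ observation, and the converse is the blocking $y_i=x_{iN}$ with a telescoping estimate (uniform continuity of $f,\dots,f^{N-1}$ on the compact space for parts 1 and 3, iterated Lipschitz constants for part 2), followed by a second telescoping bound to pass from $d(f^{iN+j}(x),f^j(x_{iN}))$ back to $d(f^k(x),x_k)$. The bookkeeping issue you isolate in part 2 is real --- the paper uses the cruder constants $L'=(L+1)N^2L$ and $d_0'=d_0/N^2L$ and does not explicitly untangle the circular dependence of $N$ on $d$ --- so your explicit choice of $N$ at the level $d=d_0$ and the amplification constant $C_N=\sum_{k=0}^{N-1}L_f^k$ is, if anything, the more careful version of the same argument.
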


The proof of the Theorem $A$ follows from the next three lemmas.

\begin{lemma}\label{lem:3.4} Let $f:X\rightarrow X$ be a uniformly continuous map where $X$ is a locally compact metric space. Then $f$ has the limit shadowing property if, and only if, $f$ has the local limit specification property.
\end{lemma}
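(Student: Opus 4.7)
The plan is to prove both directions by concatenation/subsampling constructions that convert a pseudo-orbit for one notion into a pseudo-orbit for the other, with uniform continuity used throughout to control the errors.

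For the direction limit shadowing $\Rightarrow$ local limit specification, I would take $N=1$. Given a sequence $\{x_i\}_{i\in\mathbb{N}}$ with $\lim_{i\to\infty} d(f^n(x_i), x_{i+1}) = 0$ for some $n\geq 1$, concatenate the orbit pieces by setting $y_{in+j} := f^j(x_i)$ for $0\leq j < n$ and $i\in\mathbb{N}$. The consecutive differences $d(f(y_k), y_{k+1})$ vanish whenever $k\neq in+n-1$, while at the seams they equal $d(f^n(x_i), x_{i+1})\to 0$. Hence $\{y_k\}$ is a limit pseudo-orbit, and limit shadowing supplies $x\in X$ with $d(f^k(x), y_k)\to 0$. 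Since the supremum in the conclusion is taken over finitely many offsets $j\in\{0,\dots,n\}$, and since for the boundary index $j=n$ one has
$$d(f^{(i+1)n}(x), f^n(x_i)) \leq d(f^{(i+1)n}(x), x_{i+1}) + d(x_{i+1}, f^n(x_i)),$$
with both summands tending to $0$, the local limit specification condition follows.

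For the converse, given $\{y_k\}$ with $d(f(y_k), y_{k+1})\to 0$, fix any $n\geq N$ and subsample by $x_i := y_{in}$. A telescoping bound
$$d(f^n(x_i), x_{i+1}) \leq \sum_{j=0}^{n-1} d\bigl(f^{n-j-1}(f(y_{in+j})),\, f^{n-j-1}(y_{in+j+1})\bigr),$$
combined with the (joint) uniform continuity of $f, f^2,\dots, f^{n-1}$, forces this sum to $0$ as $i\to\infty$. Local limit specification then yields $x$ with $\sup_{0\leq j\leq n} d(f^{in+j}(x), f^j(x_i))\to 0$. For an arbitrary index $k=in+j$ with $0\leq j<n$, I would split
$$d(f^k(x), y_k) \leq d(f^{in+j}(x), f^j(y_{in})) + d(f^j(y_{in}), y_{in+j}),$$
where the first summand vanishes by the specification conclusion and the second by another telescoping uniform continuity estimate along the segment $y_{in}, y_{in+1},\dots, y_{in+j}$.

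The principal technical obstacle is making each uniform continuity estimate uniform in the offset $j\in\{0,\dots,n-1\}$ for the fixed exponent $n$. Because $n$ is chosen once (depending on $N$), only finitely many iterates $f^0,\dots,f^{n-1}$ enter the estimates, and taking a common modulus of uniform continuity closes the argument. Local compactness does not play an explicit role in this lemma: uniform continuity carries the proof, and the locally compact hypothesis is inherited from the ambient setting (it is what allows one to replace the compactness-based accumulation-point step used in Proposition~\ref{prop:3.2} when no such step is needed here).
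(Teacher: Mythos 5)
Your proof is correct and follows essentially the same route as the paper: the reverse direction is the identical subsampling $x_i:=y_{in}$ with a telescoping uniform-continuity estimate, and the forward direction is the concatenation that the paper compresses into the one-line remark that limit shadowing gives local limit specification with $N=1$. Your explicit treatment of the seam indices and of the boundary offset $j=n$, and your observation that local compactness is not actually needed here, are accurate refinements rather than departures.
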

\begin{proof} By definition limit shadowing implies local limit specification with $N=1$.

Assume $f$ has the local limit specification and let $N>0$ be given as in Definition \ref{def:LimSpecification}. Let $\{x_i\}_{i\in\mathbb{N}}$ be such that $\lim_{i\rightarrow\infty}d(f(x_i),x_{i+1})=0$. Define $y_i=x_{Ni}$ for each $i\in\mathbb{N}$. We will prove that $\lim_{i\rightarrow\infty}d(f^N(y_i),y_{i+1})=0$. Let $\epsilon>0$ be given, and take $\delta>0$ such that if $d(x,y)<\delta$, then $d(f^i(x),f^i(y))<\epsilon/N$, for each $0\leq i\leq N$. Since $$\lim_{i\rightarrow\infty}d(f(x_i),x_{i+1})=0$$ there is $M>0$ such that if $i>M$ then $d(f(x_i),x_{i+1})<\delta$. Therefore, if $iN>M$ then
\begin{eqnarray*} d(f^N(y_i),y_{i+1})&=&d(f^N(x_{iN}),x_{(i+1)N}) \\
&\leq&\Sigma_{j=0}^{N-1} d(f^{N-j}(x_{iN+j}),f^{N-(j+1)}(x_{iN+j+1})) \\
&\leq& N\frac{\epsilon}{N}=\epsilon,
\end{eqnarray*}
because $$d(f(x_{iN+j}),x_{iN+j+1})<\delta,$$ for every $0\leq j<N$. Hence $\lim_{i\rightarrow\infty}d(f^N(y_i),y_{i+1})=0$, and by local limit specification property there is $x\in X$ such that $$\lim_{i\rightarrow\infty}\sup_{ 0<j\leq N}d(f^{j+iN}(x),f^j(y_i))=0.$$

Now we will prove $\lim_{i\rightarrow\infty}d(f^i(x),x_i)=0$. Indeed, for $\epsilon>0$ let $\delta>0$ be such that if $d(x,y)<\delta$ then $d(f^i(x),f^i(y))<\epsilon/2N$. Let $M>0$ be such that $d(f(x_i),x_{i+1})<\delta$ and $$d(f^{j+iN}(x),f^j(y_i))<\epsilon/2$$ if $i\geq M$ and $0\leq j<N$. If $i\geq M$ we can find $j>0$ and $0\leq l<N$ such that $jN+l=i$. Thus
\begin{eqnarray*} d(f^i(x),x_i)&=&d(f^{jN+l}(x),x_{jN+l}) \\
&\leq&d(f^{jN+l}(x),f^l(y_j))+d(f^l(y_j),x_{jN+l}) \\
&<&\frac{\epsilon}{2}+d(f^l(x_{Nj}),x_{jN+l}) \\
&\leq&\frac{\epsilon}{2}+\Sigma_{q=0}^{l-1} d(f^{l-q}(x_{Nj+q}),f^{l-(q+1)}(x_{Nj+q+1})) \\
&<&\frac{\epsilon}{2}+l\frac{\epsilon}{2N}<\epsilon,
\end{eqnarray*}
because $d(f(x_{Nj+q}),x_{Nj+q+1})<\delta$. Hence $$\lim_{i\rightarrow\infty}d(f^i(x),x_i)=0.$$

Thus $f$ has the limit shadowing property. \end{proof}

\begin{lemma} Let $f:X\rightarrow X$ be a Lipschitz map. Then $f$ has the Lipschitz shadowing property if, and only if, $f$ has the local Lipschitz specification property.
\end{lemma}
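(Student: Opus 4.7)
The plan is to follow the structure of Lemma~\ref{lem:3.4}, with the ``limit'' estimates replaced by uniform Lipschitz bounds. The easy direction, namely that Lipschitz shadowing implies local Lipschitz specification (with the same constants $L,d_0$ and $N=1$ for every $d$), is immediate once one observes that any sequence satisfying $d(f^n(x_i),x_{i+1})<d$ unfolds into a genuine $d$-pseudo orbit $y_{in+j}:=f^j(x_i)$ for $0\le j<n$, since the only non-zero one-step errors sit at the block boundaries $j=n-1$; applying Lipschitz shadowing to $\{y_k\}$ and re-indexing produces the specification estimate.

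For the converse, I would fix the Lipschitz constant $K\ge 1$ of $f$, choose $N_0:=N(d_0)$ from the specification property, and set the geometric constant $C:=\sum_{j=0}^{N_0-1}K^{j}$. Given a $d$-pseudo orbit $\{x_i\}$ with $d\le d_0/C$, the sparsification $y_i:=x_{iN_0}$ satisfies
\[ d(f^{N_0}(y_i),y_{i+1})\le \sum_{j=0}^{N_0-1}K^{N_0-1-j}\,d(f(x_{iN_0+j}),x_{iN_0+j+1})\le Cd\le d_0 \]
by standard telescoping using the Lipschitz bound at each intermediate step. Applying local Lipschitz specification to $\{y_i\}$ produces $x\in X$ with $d(f^{iN_0+j}(x),f^j(y_i))<LCd$, and combining this with the analogous interior telescope $d(f^j(y_i),x_{iN_0+j})\le Cd$ yields the target bound $d(f^k(x),x_k)<(L+1)Cd$ for every $k\in T$. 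Reading off the constants, Lipschitz shadowing holds with $L':=(L+1)C$ and $d_0':=d_0/C$.

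The main subtlety will be invoking specification with the \emph{sharp} parameter $d'=Cd$ rather than the too-weak $d'=d_0$, since only the sharp choice produces an error proportional to $d$. This requires $N_0\ge N(Cd)$, which is not automatic from the definition. I would handle it by first noting that local Lipschitz specification persists when $N(\cdot)$ is replaced by any pointwise larger integer-valued function, and so without loss of generality $N$ may be chosen non-increasing in $d$; then $Cd\le d_0$ forces $N(Cd)\le N(d_0)=N_0$. The Lipschitz hypothesis on $f$ is used precisely to keep $C$ a universal constant (depending only on $K$ and $N_0$), which is what converts the specification-level error $LCd$ into an $O(d)$ shadowing error with a universal multiplicative constant.
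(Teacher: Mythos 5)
Your proof is essentially the paper's: sparsify the pseudo-orbit with gap $N$, telescope the one-step errors through the Lipschitz constant to bound the errors of the sparsified sequence by a universal multiple $C$ of $d$, apply local Lipschitz specification, and telescope back on each block; your geometric constant $C=\sum_{j=0}^{N_0-1}K^{j}$ is in fact a sharper (and more clearly correct) bound than the paper's $N^2L$. One slip in your final reduction: you need $N(Cd)\le N(d_0)$, which would follow from $N(\cdot)$ being non-\emph{de}creasing in $d$, not non-increasing as you wrote (and the WLOG monotonization is not automatic, since a pointwise-larger monotone majorant need not be finite); the paper silently assumes a single $N$ serves all parameters $d'\le d_0$, so this quantifier issue is no worse in your version, but the stated direction should be corrected.
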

\begin{proof} By definition Lipschitz shadowing implies local Lipschitz specification with $N=1$.

Assume $f$ has the local Lipschitz specification and let $L>0$ be greater than the Lipschitz constant of the map and of the local Lipschitz specification property.
We will show that $f$ has the Lipschitz shadowing property with constants $L'=(L+1)N^2L$ and $d_0'=d_0/N^2L$, where $N$ is chosen below.
Let $0<d<d_0'$ be given and fix  $N\in\mathbb{N}$ as given by the local Lipschitz specification property.
Let $\{x_i\}_{i\in T}$ be such that $d(f(x_i),x_{i+1})<d$. Define the sequence $y_i=x_{Ni}$ for $i\in T$. Then
\begin{eqnarray*} d(f^N(y_i),y_{i+1})&=&d(f^N(x_{Ni}),x_{N(i+1)}) \\
&\leq&\Sigma_{j=0}^{N-1} d(f^{N-j}(x_{Ni+j}),f^{N-j-1}(x_{Ni+j+1})) \\
&\leq&N^2Ld<d_0.
\end{eqnarray*}
By the local Lipschitz specification property there is $x\in X$ such that
$$d(f^{j+iN}(x),f^j(y_i))<LN^2Ld=(LN)^2d,$$ for every $i\in T$, and $0\le j< N$.
Therefore, if $i\in T$ and $iN\leq j<(i+1)N$,
\begin{eqnarray*} d(f^{j}(x),x_j)&\leq& d(f^{j}(x),f^{j-iN}(x_{Ni}))+\Sigma_{r=iN}^{j-1}d(f^{j-r}(x_r),f^{j-r-1}(x_{r+1})) \\
 &\leq&d(f^{j}(x),f^{j-iN}(y_{i}))+(j-Ni)^2Ld \\
&\leq&(LN)^2d+N^2Ld<(L+1)N^2Ld=L'd.
\end{eqnarray*}
This concludes the proof.  \end{proof}

\begin{lemma} Let $f:X\rightarrow X$ be a homeomorphism. Then $f$ has the two-sided limit shadowing property if, and only if, $f$ has the local two-sided specification property.
\end{lemma}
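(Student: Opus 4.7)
The plan is to adapt the proof of Lemma~\ref{lem:3.4} (the one--sided limit case) to the bi--infinite setting. Since $f$ is a homeomorphism of a compact metric space, both $f$ and $f^{-1}$ are uniformly continuous, so every telescoping and continuity estimate used in Lemma~\ref{lem:3.4} remains available here. The hypothesis $\lim_{i\to\pm\infty}d(f(x_i),x_{i+1})=0$ provides smallness in both tails simultaneously, which reduces the generalization to careful bookkeeping.

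The easy direction is immediate: two--sided limit shadowing gives local two--sided limit specification with $N=1$. For the converse, let $N$ be the constant supplied by local two--sided limit specification. Given $\{x_i\}_{i\in\mathbb{Z}}$ with $\lim_{i\to\pm\infty}d(f(x_i),x_{i+1})=0$, I would set $y_i=x_{Ni}$ and use uniform continuity of the iterates $f,f^2,\ldots,f^N$ to run exactly the telescoping argument of Lemma~\ref{lem:3.4}, obtaining $\lim_{i\to\pm\infty}d(f^N(y_i),y_{i+1})=0$. The identity being symmetric in the sign of $i$, the input smallness transfers to both tails of $\{y_i\}$. Applying the local two--sided limit specification property produces $x\in X$ with
\[
\lim_{i\to\pm\infty}\sup_{0<j\le N} d(f^{iN+j}(x),f^j(y_i))=0.
\]

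To conclude $\lim_{i\to\pm\infty}d(f^i(x),x_i)=0$, for each $i$ with $|i|$ large I would write $i=jN+l$ with $0\le l<N$, noting that $j$ has the same sign as $i$ once $|i|$ is large. Then
\[
d(f^i(x),x_i)\le d(f^{jN+l}(x),f^l(y_j))+d(f^l(y_j),x_{jN+l}).
\]
The first term is controlled by the specification estimate; when $l=0$ one rewrites $f^{jN}(x)=f^{(j-1)N+N}(x)$ and invokes the $j=N$ entry of the preceding block, which is legitimate because the supremum ranges over $0<j\le N$. The second term is a telescoping sum of at most $N-1$ increments, each of the form $d(f^{l-q}(x_{Nj+q}),f^{l-q-1}(x_{Nj+q+1}))$, controlled by uniform continuity of $f^{l-q}$ together with $d(f(x_{Nj+q}),x_{Nj+q+1})\to 0$.

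The main (essentially only) obstacle is the index bookkeeping: checking the boundary case $l=0$ and choosing the smallness threshold as the maximum of the thresholds needed for the two tails of $\mathbb{Z}$. Once this is unified, the estimate is word--for--word the same as in Lemma~\ref{lem:3.4}, applied in both directions simultaneously.
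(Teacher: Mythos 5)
Your proposal is correct and takes exactly the route the paper intends: the paper's own proof of this lemma is simply the remark that it follows by adapting Lemma~\ref{lem:3.4}, and your write-up is precisely that adaptation (reduction to the blocked sequence $y_i=x_{Ni}$, telescoping via uniform continuity of $f,\dots,f^N$, and running the final estimate in both tails of $\mathbb{Z}$). Your attention to the $l=0$ boundary case and to taking the maximum of the two tail thresholds fills in the bookkeeping the paper leaves implicit.
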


\begin{proof}  This follows from a similar proof to Lemma \ref{lem:3.4}. \end{proof}

\subsection{Periodic shadowing and local specification}
In this section we shall show some relationships between periodic shadowing and local specification properties.

\begin{theoremB}\label{B} If $f$ has the local specification property, then $f$ has the periodic shadowing property.
\end{theoremB}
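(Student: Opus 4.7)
The plan is to adapt the argument used in the proof of Theorem \ref{equiv}, adding bookkeeping to ensure that the cyclic structure passes through every step so that the shadowing point is itself periodic.

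Given $\epsilon>0$, I would first invoke local specification at level $\epsilon/2$ to obtain $N\in\mathbb{N}$ and $\delta>0$. Next I would choose $\eta>0$ with $N\eta<\min\{\delta,\epsilon/2\}$, and use uniform continuity of $f$ on the compact space $X$ to pick $\delta_1>0$ such that $d(y,z)<\delta_1$ implies $d(f^i(y),f^i(z))<\eta$ for all $0\le i\le N$. This $\delta_1$ will serve as the $\delta$ demanded by the periodic shadowing property.

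Now let $\{x_j\}_{j\in T}$ be an arbitrary periodic $\delta_1$-pseudo orbit with period $K$, and define the sub-sampled sequence $y_i:=x_{Ni}$. Since $NK$ is a multiple of $K$, we have $y_{i+K}=x_{N(i+K)}=x_{Ni}=y_i$, so $\{y_i\}$ is periodic of period dividing $K$; in particular $y_K=y_0$. A telescoping estimate identical to the one in the proof of Theorem \ref{equiv} yields $d(f^N(y_i),y_{i+1})\le N\eta<\delta$ for every $i$. Applying local specification to the cyclic block $y_0,y_1,\ldots,y_{K-1}$ with skip time $n=N$ produces a periodic point $x\in \mbox{Per}_{KN}(f)$ satisfying $d(f^{iN+l}(x),f^l(y_i))<\epsilon/2$ for all $0\le i<K$ and $0\le l<N$.

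To finish I would verify that $x$ indeed $\epsilon$-shadows $\{x_j\}$. Because both sequences $\{x_j\}$ (period $K$) and $\{f^j(x)\}$ (period $KN$) are periodic with commensurate periods, it suffices to bound $d(f^j(x),x_j)$ for $0\le j<KN$. Writing $j=iN+l$ with $0\le l<N$, the triangle inequality combined with a telescoping bound along the original pseudo orbit gives
\[
d(f^j(x),x_j)\le d(f^j(x),f^l(y_i))+d(f^l(x_{Ni}),x_{Ni+l})<\tfrac{\epsilon}{2}+l\eta<\epsilon.
\]
The only delicate point is the bookkeeping: one has to check that sub-sampling preserves the cyclic condition, that the choice $n=N$ is legitimate in local specification, and that $\eta$ is small enough both to verify the hypothesis of local specification and to absorb the telescoping error in the final estimate. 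Beyond this there is no genuine obstacle; the proof is a careful cyclic refinement of the argument used for Theorem \ref{equiv}.
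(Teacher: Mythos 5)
Your proposal is correct and follows essentially the same route as the paper's proof: sub-sample the periodic pseudo-orbit at multiples of $N$, check via the telescoping estimate that the cyclic block $y_0,\dots,y_{K-1}$ satisfies the hypothesis of local specification with $n=N$, and then transfer the $\epsilon/2$-tracing of the $y_i$ back to the original pseudo-orbit. Your bookkeeping of the cyclic closing condition ($y_{i+K}=y_i$, so the block closes up at $y_K=y_0$) is in fact slightly cleaner than the paper's, which states the closing estimate with a minor index slip.
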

\begin{proof}
Assume $f$ has the local specification property. Let $\epsilon>0$ be given and take $N\in\mathbb{N}$ and $\frac{\epsilon}{2}>\delta>0$ by the local specification property applied to $\epsilon/2$. Let $\eta>0$ be such that
$$N\eta<\delta.$$
Get $\delta_1>0$ such that if $d(x,y)<\delta_1$, then $d(f^i(x),f^i(y))<\eta$ for every $-N\leq i\leq N$. Let $\{x_i\}_{i\in T}$ such that $d(f(x_i),x_{i+1})<\delta_1$, and $x_{i+M}=x_i$ for each $i\in T$. Define the sequence $y_i=x_{Ni}$ for $i\in T$.
Then
\begin{eqnarray*} d(f^N(y_i),y_{i+1})&=&d(f^N(x_{Ni}),x_{N(i+1)}) \\
&\leq&\Sigma_{j=0}^{N-1} d(f^{N-j}(x_{Ni+j}),f^{N-j-1}(x_{Ni+j+1})) \\
&\leq&N\eta<\delta,
\end{eqnarray*}
and therefore,
\begin{eqnarray*} d(f^{N}(y_{M-1}),y_{1})&=&d(f^{N}(x_{N(M-1)}),x_{N}) \\
&=&d(f^{N}(x_{N(M-1)}),x_{NM}) \\
&=&d(f^{N}(y_{M-1}),y_{M})<\delta.
\end{eqnarray*}
By the local specification property there is $x\in Per(f)$ such that
$$d(f^{j}(x),f^{j-iN}(y_i))<\frac{\epsilon}{2}, \text{ for } i\in\{0,1,..\} \text{ and } iN\leq j<(i+1)N.$$
Therefore, if $i\in\{0,1,..\}$ and $iN\leq j<(i+1)N$,
\begin{eqnarray*} d(f^{j}(x),x_j)&\leq& d(f^{j}(x),f^{j-iN}(x_{Ni}))+\Sigma_{r=iN}^{j-1}d(f^{j-r}(x_r),f^{j-r-1}(x_{r+1})) \\
 &\leq&d(f^{j}(x),f^{j-iN}(y_{i}))+(j-Ni)\eta \\
&\leq&\frac{\epsilon}{2}+N\eta<\epsilon,
\end{eqnarray*}
which concludes the proof.
\end{proof}

\noindent\textbf{Remark:} If $f$ has the strong periodic shadowing property, then $f$ has the local specification property with $N=1$ for every $\epsilon>0$. Therefore,
\newline
\indent \textit{Strong Periodic Shadowing $\Rightarrow$ Local Specification $\Rightarrow$ Periodic Shadowing.}

\begin{corollary}\label{prop:3.3} Let $f$ be an expansive homeomorphism. If $f$ has the local weak specification, then $f$ has the local specification.
\end{corollary}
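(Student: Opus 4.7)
The approach is to use expansiveness as a rigidity tool: it forces the tracing point produced by local weak specification to be unique, and hence equal to a natural time-shift of itself, which gives periodicity.

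Let $c>0$ be an expansive constant for $f$, and let $\epsilon>0$ be given; shrink $\epsilon$ if necessary so that $2\epsilon<c$. Apply local weak specification to $\epsilon$ to obtain $N\in\mathbb N$ and $\delta>0$ as in Definition \ref{def:locspec}(2). I claim these same $N$ and $\delta$ witness local specification for $\epsilon$. Suppose $x_1,\dots,x_k\in X$ and $n\ge N$ satisfy $d(f^n(x_i),x_{i+1})<\delta$ together with the closing condition $x_{k+1}=x_1$. Extend the finite cyclic list to a bi-infinite sequence $\{x_i\}_{i\in\mathbb Z}$ by setting $x_{i+k}=x_i$ for all $i\in\mathbb Z$; the gap condition $d(f^n(x_i),x_{i+1})<\delta$ still holds for every $i\in\mathbb Z$ because of the closing relation.

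Next, invoke Proposition \ref{prop:3.2} (the bi-infinite reformulation of local weak specification) to produce a point $x\in X$ with
\[
d(f^{j+in}(x),f^{j}(x_i))<\epsilon
\qquad\text{for all }i\in\mathbb Z,\ 0\le j<n.
\]
Now consider the shifted point $y=f^{kn}(x)$. Using $x_{i+k}=x_i$, for any $i\in\mathbb Z$ and $0\le j<n$,
\[
d(f^{j+in}(y),f^{j}(x_i))=d(f^{j+(i+k)n}(x),f^{j}(x_{i+k}))<\epsilon,
\]
so $y$ traces the same bi-infinite sequence as $x$. By the triangle inequality, $d(f^{j+in}(x),f^{j+in}(y))<2\epsilon$ for every $i\in\mathbb Z$ and $0\le j<n$; since every integer $m$ has the form $j+in$, this yields $d(f^m(x),f^m(y))<2\epsilon<c$ for all $m\in\mathbb Z$.

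Expansiveness now forces $x=y=f^{kn}(x)$, so $x\in\mathrm{Per}_{kn}(f)$. Meanwhile the tracing estimate above, restricted to $0\le i\le k-1$ and $in\le j<(i+1)n$ (and relabeled via the definition's convention that indexes points $x_2,\dots,x_{k+1}=x_1$), gives exactly the local specification inequality $d(f^j(x),f^{j-in}x_{i+1})<\epsilon$. The main technical care needed is consistent bookkeeping between the index convention of Definition \ref{def:locspec}(1) and that of Proposition \ref{prop:3.2}, and verifying that the closing relation survives the periodic extension so that Proposition \ref{prop:3.2} is legitimately applicable; once that is set up, the expansiveness argument is immediate.
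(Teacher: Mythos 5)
Your proof is correct, but it takes a genuinely different route from the paper's. The paper argues in two steps through shadowing: local weak specification implies shadowing (Theorem \ref{equiv}), and an expansive homeomorphism with shadowing has the strong periodic shadowing property (a classical fact quoted from Aoki--Hiraide), which in turn gives local specification by the remark following Theorem B. You instead work directly at the level of specification: you periodically extend the cyclic data, invoke the bi-infinite reformulation in Proposition \ref{prop:3.2} to get a tracing point $x$, observe that $f^{kn}(x)$ traces the same data, and let expansiveness collapse the two, forcing $x\in \mathrm{Per}_{kn}(f)$. In effect you have unpacked the classical ``expansive plus shadowing implies periodic shadowing'' argument and run it one level up, so your proof is self-contained (modulo Proposition \ref{prop:3.2}) and avoids the external citation, while also producing the exact period $kn$ demanded by Definition \ref{def:locspec}(1) directly rather than through the intermediate shadowing statements. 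The two points you flag as needing care are indeed the only delicate ones and both are fine: proving the property for $\epsilon$ small enough that $2\epsilon$ is below the expansive constant suffices for all $\epsilon$, and the off-by-one between the indexing of Definition \ref{def:locspec} (which places $x_{i+1}$ on block $i$) and that of Proposition \ref{prop:3.2} (which places $x_i$ on block $i$) is handled by the relabeling $z_i=x_{(i\bmod k)+1}$. The paper's route is shorter on the page and reuses machinery already established; yours is more elementary and makes the mechanism visible.
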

\begin{proof} By Theorem \ref{equiv} if $f$ has the local weak specification, then $f$ has the shadowing property. It is well known (see \cite{Ao-Hi}, for example)  that if $f$ is an expansive homeomorphism with the shadowing property, then $f$ has the strong periodic shadowing property.
Therefore, $f$ has the local specification property.
\end{proof}

We recall that a homeomorphism $f:X\rightarrow X$ is \emph{transitive} if there is a point $x$ with a dense orbit $\{f^n(x);n\in\mathbb Z\}$.

\begin{proposition}\label{porp:3.1} Let $f$ be a transitive homeomorphism with the periodic shadowing property, then $f$ has the local weak specification property.
\end{proposition}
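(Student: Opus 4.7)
The plan is to close up any finite sequence of orbit segments into a periodic $\delta$-pseudo-orbit by adjoining a connecting piece obtained from transitivity, and then apply the periodic shadowing property to track this periodic pseudo-orbit by a periodic point.

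Given $\epsilon > 0$, I would first invoke the periodic shadowing property to obtain $\delta_0 > 0$ such that every periodic $\delta_0$-pseudo-orbit is $\epsilon$-shadowed by some periodic point. I then propose to take $N := 1$ and $\delta := \delta_0$ in the definition of local weak specification; no lower bound on $n$ is needed for this direction.

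Fix $k \in \mathbb N$, $n \geq 1$, and $x_1, \ldots, x_k$ with $d(f^n(x_i), x_{i+1}) < \delta_0$ for $1 \leq i \leq k-1$. The key construction is a \emph{connector}: by transitivity applied to the non-empty open sets $B(f^n(x_k), \delta_0)$ and $B(x_1, \delta_0)$, I would produce $y_0 \in X$ and $M \geq 1$ with $d(y_0, f^n(x_k)) < \delta_0$ and $d(f^M(y_0), x_1) < \delta_0$. Then I define a sequence $\{z_j\}_{j \in \mathbb Z}$ of period $P := kn + M$ by setting $z_j := f^{j-in}(x_{i+1})$ for $in \leq j < (i+1)n$, $i = 0, \ldots, k-1$, and $z_j := f^{j-kn}(y_0)$ for $kn \leq j < kn + M$, extended $P$-periodically. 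Consecutive distances $d(f(z_j), z_{j+1})$ are either zero (within a true orbit block) or bounded by $\delta_0$ (at the $k+1$ junctions between blocks), so $\{z_j\}$ is a periodic $\delta_0$-pseudo-orbit.

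Periodic shadowing then furnishes a periodic point $z$ with $d(f^j(z), z_j) < \epsilon$ for all $j \in \mathbb Z$. Restricting to indices $j$ with $in \leq j < (i+1)n$ and $i \in \{0, \ldots, k-1\}$ yields $d(f^j(z), f^{j-in}(x_{i+1})) < \epsilon$, which is precisely the local weak specification inequality. The main obstacle is the connector step: it relies on the standard fact that for a transitive homeomorphism on a compact metric space (in the nondegenerate case, say with no isolated points), any non-empty open set $U$ has some positive iterate $f^M(U)$ meeting any non-empty open set $V$; this follows from picking a dense-orbit point $p$ and times $n_1 < n_2$ with $f^{n_1}(p) \in U$ and $f^{n_2}(p) \in V$, and setting $y_0 := f^{n_1}(p)$, $M := n_2 - n_1$. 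Everything else is routine bookkeeping.
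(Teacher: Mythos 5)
Your proof is correct, and it rests on the same key device as the paper's: use transitivity to produce a connector from the end of the data back to its beginning, close everything up into a periodic pseudo-orbit, and apply periodic shadowing. The difference is in how the conclusion is then extracted. You assemble the periodic pseudo-orbit directly out of the orbit segments $f^{j}(x_{i+1})$, $0\le j<n$, so the shadowing periodic point immediately satisfies the local weak specification estimate with $N=1$ and $\delta=\delta_0$, with no further work. The paper instead uses the connector to establish the \emph{finite shadowing property} for arbitrary finite $\delta$-pseudo-orbits, upgrades this to the full shadowing property by a compactness/accumulation-point argument, and only then invokes Theorem \ref{equiv} (shadowing is equivalent to local weak specification), which itself involves regrouping into blocks of length $N$ and a uniform continuity estimate. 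Your route is shorter and self-contained; the paper's route has the side benefit of producing the shadowing property itself, which is what Corollary \ref{lem:3.1} actually extracts from this proposition. One point you share with the paper: both arguments use that transitivity (a dense two-sided orbit) yields, for non-empty open sets $U,V$, some $M\ge 1$ with $f^{M}(U)\cap V\neq\emptyset$; you flag the isolated-point caveat explicitly, while the paper uses the fact silently.
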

\begin{proof}
We shall prove that $f$ has the finite shadowing property, i.e., for every $\epsilon>0$ there is $\delta>0$ such that if $\{x_i\}_{i=0}^n$, with $d(f(x_i),x_{i+1})<\delta$,
then there is $x\in X$ such that $d(f^i(x),x_i)<\delta$ for $0\leq i\leq n$. Let $\epsilon$ be given. Take $\delta>0$ as in the periodic shadowing property for $\epsilon/2$. If $\{x_i\}_{i=0}^n$ is such that $d(f(x_i),x_{i+1})<\delta$, by transitivity, there is $N\in\mathbb{N}$ and $a\in B_{\delta}(f(x_n))\cap f^{-N}(B_{\delta}(x_0))$. Define $\{y_i\}_{i\in\mathbb{Z}}$, by $y_i=x_i$ if $0\leq i\leq n$, $y_i=f^{i-n}(a)$ if $n<i\leq n+N-1$ and with period $n+N$. Then $\{y_i\}$ is a periodic $\delta$-pseudo orbit, and by periodic shadowing there is $y\in Per(f)$ such that  $d(f^i(y),y_i)<\delta$ for each $i\in\mathbb{Z}$. Therefore, for each $0\leq i\leq n$,
$$d(f^i(y),x_i)=d(f^i(y),y_i)<\delta.$$
Thus $f$ has the finite periodic shadowing. Notice that if $f$ has the finite shadowing property, then $f$ has the shadowing property. In fact, if $\{x_i\}_{i\in\mathbb{Z}}$ is a $\delta$-pseudo orbit, then for each finite $\delta$-pseudo orbit $\{x_i\}_{i=-N}^N$ we can find a $\frac{\epsilon}{2}$-shadowing point $z_N$ for it, and if $z$ is a accumulation point of $\{z_N\}_{N\in\mathbb{N}}$, then $z$ $\epsilon$-shadows the pseudo  orbit  $\{x_i\}$. Hence $f$ has the shadowing property. By Theorem \ref{equiv} $f$ has the local weak specification property.
\end{proof}

\begin{corollary}\label{lem:3.1} Let $f$ be a transitive homeomorphism. Then $f$ has the periodic shadowing property, if and only if, $f$ has the special shadowing property.
\end{corollary}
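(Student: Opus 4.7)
The plan is to observe that this corollary is essentially a repackaging of the preceding Proposition~\ref{porp:3.1} together with Theorem~\ref{equiv}, so no new construction is required. I interpret ``special shadowing property'' as ``special periodic shadowing property'' in the sense of Definition~\ref{def:shadowing}(4), namely shadowing together with periodic shadowing.

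The backward direction is immediate from the definition: if $f$ has the special periodic shadowing property, then by item (4) of Definition~\ref{def:shadowing} it has the periodic shadowing property, and transitivity is not needed here.

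For the forward direction, suppose $f$ is a transitive homeomorphism with the periodic shadowing property. I would first invoke Proposition~\ref{porp:3.1} to conclude that $f$ has the local weak specification property. Then, applying Theorem~\ref{equiv}, local weak specification is equivalent to the shadowing property, so $f$ has the shadowing property. Combining this with the hypothesis that $f$ has the periodic shadowing property gives, by Definition~\ref{def:shadowing}(4), the special periodic shadowing property.

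There is essentially no obstacle, since the real content has already been established in Proposition~\ref{porp:3.1} (which uses transitivity to produce an auxiliary periodic pseudo-orbit closing up a finite pseudo-orbit) and Theorem~\ref{equiv}. The only small point worth flagging is the slight discrepancy in terminology (``special shadowing'' versus the defined ``special periodic shadowing''), which should be clarified in the final text so the reader does not have to guess which notion is meant.
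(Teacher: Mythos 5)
Your proposal is correct and matches the paper's intended argument: the corollary is stated without a separate proof precisely because Proposition~\ref{porp:3.1} already establishes (via transitivity and the finite-shadowing argument) that periodic shadowing implies shadowing, and the converse direction is definitional. The only cosmetic difference is that you route through local weak specification and back via Theorem~\ref{equiv}, whereas the proof of Proposition~\ref{porp:3.1} obtains the shadowing property directly as an intermediate step; your remark on the ``special shadowing'' versus ``special periodic shadowing'' terminology is also apt.
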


\section{Measure expansive systems}
In this section we study strong measure expansive homeomorphisms and show that strong measure expansive homemomorphims with shadowing have the strong periodic shadowing property. On one hand we show by an example that a strong measure expansive homeomorphism does not need to be  expansive. On the other hand, we give an example of a measure expansive homeomorphism, which is not strong measure expansive. We summarize the results in Theorem C.

\begin{theorem}\label{measure1} Let $f$ be a measure expansive homeomorphism. If $f$ has the shadowing property, then $f$ has the periodic shadowing property.
\end{theorem}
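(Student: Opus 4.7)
The plan is to use shadowing to locate a candidate $y$ that $\epsilon'$-shadows the periodic pseudo-orbit, exploit the $M$-periodicity of the pseudo-orbit to trap the $f^M$-orbit of $y$ inside $\Gamma_{2\epsilon'}(y)$, and then invoke measure expansiveness to force the closure of this trapped orbit to contain an honest periodic point.

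Let $e>0$ be a measure-expansivity constant for $f$. Given $\epsilon>0$, choose $\epsilon'>0$ with $3\epsilon'<\epsilon$ and $2\epsilon'<e$, and let $\delta>0$ be the shadowing constant associated to $\epsilon'$. Given a periodic $\delta$-pseudo orbit $\{x_i\}_{i\in\mathbb Z}$ with $x_{i+M}=x_i$, pick $y\in X$ with $d(f^i(y),x_i)<\epsilon'$ for all $i$. A direct change of index using $x_{i+kM}=x_i$ shows that $f^{kM}(y)$ also $\epsilon'$-shadows $\{x_i\}$ for every $k\in\mathbb Z$, so the triangle inequality yields $d(f^i(y),f^{i+kM}(y))<2\epsilon'$ for every $i,k\in\mathbb Z$. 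Hence $f^{kM}(y)\in\Gamma_{2\epsilon'}(y)$, and the compact $f^M$-invariant set $K:=\overline{\{f^{kM}(y):k\in\mathbb Z\}}$ is contained in $\Gamma_{2\epsilon'}(y)\subseteq\Gamma_e(y)$.

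The next step is to show that $K$ contains an $f^M$-periodic point $w$, which is automatically $f$-periodic. Suppose for contradiction that no such $w$ exists. By Krylov--Bogolyubov there is an $f^M$-invariant Borel probability measure $\mu$ supported on $K$. Any atom $w_0$ of $\mu$ would satisfy $\mu(\{f^{kM}(w_0)\})=\mu(\{w_0\})>0$ for all $k$, which, together with $\mu(X)=1$, forces the $f^M$-orbit of $w_0$ to be finite; then $w_0\in K$ would be $f^M$-periodic, contradicting the supposition. Thus $\mu$ is non-atomic. Averaging under $f$, set $\nu:=\frac{1}{M}\sum_{j=0}^{M-1}(f^j)_*\mu$. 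Then $\nu$ is an $f$-invariant Borel probability measure on $X$, and any atom of $\nu$ would pull back to an atom of $\mu$, so $\nu$ is non-atomic as well.

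Now apply measure expansiveness to the non-atomic $f$-invariant measure $\nu$: $\nu(\Gamma_e(y))=0$. On the other hand, since $K\subseteq\Gamma_e(y)$ we have $\mu(\Gamma_e(y))=1$, and the $j=0$ term of the average alone gives $\nu(\Gamma_e(y))\ge\frac{1}{M}\mu(\Gamma_e(y))=\frac{1}{M}>0$, a contradiction. Therefore $K$ contains a periodic point $w$; since $w\in\Gamma_{2\epsilon'}(y)$,
$$d(f^i(w),x_i)\le d(f^i(w),f^i(y))+d(f^i(y),x_i)\le 2\epsilon'+\epsilon'=3\epsilon'<\epsilon$$
for every $i\in\mathbb Z$, which verifies the periodic shadowing property. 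The main obstacle is the third step: going from an $f^M$-invariant measure on $K$ to an $f$-invariant measure on $X$ while (i) retaining non-atomicity and (ii) preserving a lower bound on the $\Gamma_e(y)$-mass; the $\frac{1}{M}$-average $\nu$ handles both concerns simultaneously, and the role of measure expansiveness is precisely to forbid such a non-atomic $\nu$, thereby supplying the periodic point.
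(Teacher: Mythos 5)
Your proof is correct and follows essentially the same strategy as the paper's: use shadowing to obtain a tracing point, observe that the periodicity of the pseudo-orbit traps its orbit inside $\Gamma$-sets, and then invoke measure expansiveness to force an invariant measure on the orbit closure to have a periodic atom, which serves as the periodic shadowing point. The only difference is bookkeeping: the paper covers the full $f$-orbit closure by the $N$ sets $\Gamma_{\epsilon}(f^l(x))$ and takes an $f$-invariant limit measure directly, whereas you confine the $f^M$-orbit to the single set $\Gamma_{2\epsilon'}(y)$ and pass from an $f^M$-invariant measure to an $f$-invariant one by averaging.
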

\begin{proof} Let $\epsilon_0>0$ be the measure expansive constant of $f$, $\epsilon_0>2\epsilon$ and $\delta_2$ be given by the shadowing property for $\epsilon/2>0$. Assume $\{x_i\}_{i\in\mathbb{Z}}$ is a periodic $\delta$-pseudo orbit with period $N$ and $\delta<\delta_2$. Then there is $x\in X$ such that $d(f^i(x),x_i)<\epsilon/2$. Then we have two cases: $x$ is periodic or not. If $x$ is periodic nothing has to be shown.  Assume now that $x$ is not periodic. Fix $0\leq l<N$ and $k\in\mathbb{Z}$. Since $x_{i+l+kN}=x_{i+l}$
\begin{eqnarray*} d(f^i(f^l(x)),f^i(f^{kN}(f^l(x))))&\leq&d(f^i(f^l(x)),x_{i+l})+d(x_{i+l},f^i(f^{kN}(f^l(x)))) \\
&=& d(f^{i+l}(x),x_{i+l})+d(x_{i+l+kN},f^{i+kN+l}(x)) \\
&<& \frac{\epsilon}{2}+\frac{\epsilon}{2}=\epsilon.
\end{eqnarray*}
Therefore, $f^{kN}(f^l(x))\in\Gamma_\epsilon(f^l(x))$ for every $k\in\mathbb{Z}$. Thus the closure of the orbit of $x$ is a subset of $\bigcup_{l=0}^{N-1}\Gamma_{\epsilon}(f^l(x))$. Take $\mu$ to be a weak accumulation point of the uniform distributions supported on the finite orbits $x, f(x),...,f^{N-1}(x)$. Then $\mu$ is an invariant measure. Therefore $$\mu(\bigcup_{l=0}^{N-1}\Gamma_{\epsilon}(f^l(x)))=1,$$ and $\mu(\Gamma_{\epsilon}(f^l(x)))>0$ for some $0\leq l<N$. Since $f$ is measure expansive with constant $\epsilon$ it follows that $\mu$ is atomic. Thus there is $z\in\bigcup_{l=0}^{N-1}\Gamma_{\epsilon}(f^l(x))$ such that $\mu(z)>0$. Since $\mu$ is an invariant probability measure, $z$ is periodic. Hence $z\in\Gamma_\epsilon(f^l(x))$, for some $0\leq l<N$, and
\begin{eqnarray*} d(x_i,f^i(f^{-l}(z)))&\leq& d(x_i,f^i(x))+d(f^i(x),f^i(f^{-l}(z))) \\
&=& d(x_i,f^i(x))+d(f^i(x),f^{i-l}(z)) \\
&\leq& \epsilon+\epsilon=2\epsilon.
\end{eqnarray*}
\end{proof}

\begin{corollary}\label{measure1.1} Let $f$ be an measure expansive homeomorphism. Then $f$ has the shadowing property if, and only if, $f$ has the special shadowing property.
\end{corollary}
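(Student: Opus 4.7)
The statement is essentially a direct repackaging of Theorem \ref{measure1}, so the proof should be very short.

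The plan is to treat the two implications separately. For the backward direction, by Definition \ref{def:shadowing}(4) the special (periodic) shadowing property is defined as the conjunction of ordinary shadowing and periodic shadowing, so it trivially entails shadowing. Nothing further is needed here.

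For the forward direction, I would assume $f$ has the shadowing property and invoke Theorem \ref{measure1}: since $f$ is measure expansive, shadowing implies the periodic shadowing property. Combining the hypothesis (shadowing) with the conclusion of Theorem \ref{measure1} (periodic shadowing) gives exactly both clauses in Definition \ref{def:shadowing}(4), so $f$ has the special periodic shadowing property. There is no real obstacle; the entire content of the corollary is already contained in Theorem \ref{measure1}, and the role of the corollary is just to record the equivalence in a symmetric form. A one-line proof such as ``It follows directly from the definition and Theorem \ref{measure1}'' is appropriate.
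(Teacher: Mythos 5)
Your proof is correct and is exactly the intended argument: the paper states this as an immediate corollary of Theorem \ref{measure1} together with Definition \ref{def:shadowing}(4), giving no separate proof. The only cosmetic point is that the corollary's ``special shadowing property'' should be read as the ``special periodic shadowing property'' of that definition, which you have correctly done.
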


By definitions, if a homeomorphism is strong measure expansive, then it is measure expansive as well.

\begin{lemma} Let $f:X\rightarrow X$ be a homeomorphism without periodic points. $f$ is measure expansive if, and only if, $f$ is strong measure expansive.
\end{lemma}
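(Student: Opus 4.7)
The plan is to check both directions separately; the hypothesis of no periodic points is needed only for one of them.

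For the forward direction (strong measure expansive implies measure expansive), no hypothesis on periodic points is required. Let $\delta>0$ be the strong measure expansive constant. For any invariant non-atomic probability measure $\mu$ and any $x\in X$, we have $\mu(\{x\})=0$ by non-atomicity, and the strong measure expansive identity $\mu(\Gamma_\delta(x))=\mu(\{x\})$ therefore reduces to $\mu(\Gamma_\delta(x))=0$, which is precisely measure expansiveness (with the same $\delta$).

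For the converse, the key intermediate claim is that every $f$-invariant Borel probability measure on $X$ is non-atomic when $f$ has no periodic points. To see this, suppose toward contradiction that $\mu$ is an invariant probability measure with $\mu(\{x\})=c>0$ for some $x\in X$. By $f$-invariance, $\mu(\{f^n(x)\})=c$ for every $n\in\mathbb{Z}$. Since $x$ is not periodic, the orbit points $\{f^n(x):n\in\mathbb{Z}\}$ are pairwise distinct, giving a countable family of pairwise disjoint singletons each of $\mu$-mass $c>0$. This forces $\mu(X)=\infty$, contradicting $\mu(X)=1$. Hence $\mu$ is non-atomic.

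With this in hand, let $\delta>0$ be the measure expansive constant of $f$. For any invariant probability measure $\mu$ and any $x\in X$, the claim above gives that $\mu$ is non-atomic, so measure expansiveness yields $\mu(\Gamma_\delta(x))=0$, while non-atomicity gives $\mu(\{x\})=0$. Thus $\mu(\Gamma_\delta(x))=\mu(\{x\})$ holds trivially, and $f$ is strongly measure expansive with the same constant $\delta$.

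There is no serious obstacle here; the whole argument is a short definition-chase once one observes that an atom of an invariant probability measure would produce a periodic point in any system where the orbit of that atom is infinite and discrete in mass. The only minor point worth stating explicitly is that the strong measure expansive constant in the converse can be taken to be the measure expansive constant itself, so the equivalence is in fact witnessed by the same $\delta$.
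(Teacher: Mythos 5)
Your proposal is correct and follows essentially the same route as the paper: the forward direction is definitional, and the converse rests on the observation that an invariant probability measure can have no atoms when $f$ has no periodic points, after which both $\mu(\Gamma_\delta(x))$ and $\mu(\{x\})$ vanish. You merely spell out the mass-counting argument (infinitely many disjoint orbit singletons of equal positive measure) that the paper states without proof.
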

\begin{proof} By definitions, if $f$ is strong measure expansive, then it is measure expansive.
Assume $f$ is measure expansive and let $\delta>0$ be given by the measure expansive property. If $\mu$ is an invariant atomic measure, then $\mu$ is supported on periodic orbits, a contradiction since $f$ has no periodic points.
If $\mu$ is an invariant non-atomic probability measure, then $\mu(\Gamma_\delta(x))=0=\mu(x)$, for every $x\in X$. Hence $f$ is strong measure expansive.
\end{proof}

\begin{lemma}\label{measure2} Let $f:X\rightarrow X$ be a homeomorphism. If $f$ is strong measure expansive, then $f|Per(f)$ is expansive.
\end{lemma}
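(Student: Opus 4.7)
The plan is to prove that the strong measure expansivity constant $\delta$ itself serves as an expansivity constant for the restriction $f|\mathrm{Per}(f)$. Concretely, I will argue by contradiction: suppose there exist distinct periodic points $x, y \in \mathrm{Per}(f)$ with $y \in \Gamma_\delta(x)$, meaning $d(f^n(x), f^n(y)) \le \delta$ for all $n \in \mathbb{Z}$. The goal is to manufacture an invariant probability measure that violates the defining equation $\mu(\Gamma_\delta(x)) = \mu(\{x\})$.

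The natural candidate is the atomic invariant measure uniformly supported on the orbit of $y$: if $p$ denotes the (not necessarily prime) period of $y$, put $\mu = \frac{1}{p}\sum_{i=0}^{p-1}\delta_{f^i(y)}$. This is clearly $f$-invariant. Now I split into two cases according to whether the orbits of $x$ and $y$ coincide or are disjoint (for distinct periodic points these are the only possibilities). In the disjoint case, $\mu(\{x\}) = 0$ while $y \in \Gamma_\delta(x)$ forces $\mu(\Gamma_\delta(x)) \ge \mu(\{y\}) = \frac{1}{p} > 0$, contradicting strong measure expansiveness. In the same-orbit case, $x, y$ are two distinct points of the support of $\mu$, both lying in $\Gamma_\delta(x)$ (the point $x$ trivially, and $y$ by assumption), so $\mu(\Gamma_\delta(x)) \ge \mu(\{x\}) + \mu(\{y\}) = \frac{2}{p} > \frac{1}{p} = \mu(\{x\})$, again a contradiction.

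Since both cases yield a contradiction, no such distinct pair exists, and $f|\mathrm{Per}(f)$ is expansive with expansivity constant $\delta$. There is no real obstacle here: the only subtle point is remembering to choose the invariant measure supported on the orbit of $y$ rather than of $x$ (so that $y$ is in the support and the inequality $\mu(\Gamma_\delta(x)) > \mu(\{x\})$ actually bites), and to handle the same-orbit case by noting that $x$ itself always lies in $\Gamma_\delta(x)$, so its mass adds to the mass of $y$ in $\Gamma_\delta(x)$.
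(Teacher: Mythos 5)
Your proof is correct and follows essentially the same approach as the paper: both arguments contradict strong measure expansiveness by exhibiting an atomic invariant probability measure on periodic orbits for which $\mu(\Gamma_\delta(x))>\mu(\{x\})$. The only difference is cosmetic --- the paper puts uniform mass on $O(x)\cup O(y)$ while you use the orbit of $y$ alone with a case split on whether the orbits coincide, which in fact handles the same-orbit normalization a bit more carefully than the paper does.
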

\begin{proof} Let $\delta>0$ the constant of strong measure expansiveness  of $f$.
If $x\neq y\in Per(f)$ satisfies $y\in\Gamma_\delta(x)$, let $m$ be the period of $x$ and $n$ be the period of $y$.
Let $\mu$ be the invariant probability measure such that for each $p\in O(x)\cup O(y)$ we get $\mu(p)=1/(m+n)$. Hence
$$\mu(\Gamma_\delta(x))\geq\mu(x)+\mu(y)=\frac{2}{m+n}>\frac{1}{m+n}=\mu(x).$$
Contradiction. Therefore, $f|Per(f)$ is expansive.
\end{proof}

By definition $1$-expansiveness is equivalent to expansiveness, and it is well known (see \cite{MS}) that for every $n\in\mathbb{N}$ if $f$ is $n$-expansive, then $f$ is measure expansive.

\begin{example}\label{myex}\cite[Theorem A]{CC} Let $A:\mathbb{T}^2\rightarrow\mathbb{T}^2$ be a linear Anosov diffeomorphism of the torus. We will create a new compact metric space $X$ and a homeomorphism $f:X\rightarrow X$ that is a measure expansive homeomorphism, but not strong measure expansive.
Further, suppose $\{p_k\}_{k\in\mathbb{N}}$ is a sequence of periodic points of $A$, which we suppose belong to different orbits. Define $X$ as the set $\mathbb{T}^2\cup E$ where $E$ is an infinite enumerable set.

For each $k\in\mathbb{N}$ let $\pi(p_k)$ denote the prime period of $p_k$ and for each $j\in\{0,\dots,\pi(p_k)-1\}$ consider a point $q(k,j)\in E$. We can choose these points pairwise different. Define a distance $D$ on $X$ by
$$D(x,y)=\begin{cases}d(x,y), &x,y\in \mathbb{T}^2,\\
\frac{1}{k}+d(y,A^j(p_k)), &x=q(k,j), y\in \mathbb{T}^2,\\
\frac{1}{k}+\frac{1}{m}+d(A^j(p_k),A^r(p_m)), &x=q(k,j), y=q(m,r), k\neq m~\text{or}~j\neq r.
\end{cases}$$
Define the homeomorphism $f:X\rightarrow X$ by $$f(x)=\begin{cases} A(x), &x\in \mathbb{T}^2,\\
q(k,(j+1)\hspace{-0.3cm}\mod\pi(p_k)), &x=q(k,j).
\end{cases}$$ Note that $E$ splits into an infinite number of periodic orbits of $f$. Indeed, for $k\in\mathbb{N}$, the set $\{q(k,j)\,;\,j\in\{0,\dots,\pi(p_k)-1\}\}$ is a periodic orbit for $f$ with period $\pi(p_k)$. By \cite{CC} $f:X\rightarrow X$ is a measure expansive homeomorphism on the compact metric $(X,D)$. But $f$ is not strong measure expansive, otherwise by Lemma \ref{measure2} $f|Per(f)$ must be expansive and since for every $\delta>0$ we can find $k>0$ large enough such that $q(k,0)\in\Gamma_\delta(p_k)$, we obtain a  contradiction since $q(k,0)\neq p_k$.
\end{example}

\begin{example}\label{artiguex} We will construct a family of strong measure expansive homeomorphism, which are $N+1$-expansive but not $N$-expansive. In \cite{A} Artigue  defines  a compact metric space $S$, and for each $N\in\mathbb{N}$, an $N+1$-expansive homeomorphism $f_N:S\rightarrow S$, which is not $N$-expansive, but $f_N|\Omega(f_N)$ is expansive and $\Omega(f_N)$ is isolated, i.e., there is an open set $U$ such that $\Omega(f_N)=\bigcap_{n\in\mathbb{Z}}f_N^n(U)$. We will show that $f_N$ is strong measure expansive.

Define $\delta=\min(\delta_1,\delta_2,\delta_3)$, where $\delta_1>0$ is an expansive constant of $f_N|\Omega(f_N)$, $\delta_2>0$ is an $N+1$-expansive constant of $f$ and $\delta_3>0$ is such that if $p\in\Omega(f_N)$, then $B_\delta(p)\subset U$. Let $\mu$ be an invariant probability measure on $S$, and assume that there is $x\in S$ such that $\mu(\Gamma_\delta(x))\neq\mu(\{x\})$. By the choice of $\delta<\delta_2$ we have that $\Gamma_\delta(x)$ is a finite set, and therefore there is a finite set $A$ such that $\mu(A)>0$, and $x\neq A$. Since $\mu$ is an invariant probability measure there is at least a periodic point $y$ such that $\mu(y)>0$, and thus $y\in\Omega(f_N)$. By the choice of $\delta<\delta_1$ we have that $x\notin\Omega(f_N)$. Since $$d(f_N^i(x),f_N^i(y))<\delta\leq\delta_3, \ \forall i\in\mathbb{Z},$$ we get $f_N^i(x)\in U$, for every $i\in\mathbb{Z}$, and hence $x\in\Omega(f_N)$. This contradiction implies that $f_N$ is strong measure expansive.
\end{example}

Beside the expansive properties under discussion, there are other kinds of expansive properties giving insights to the maps which are not expansive. For example, one can discuss countably-expansive property (\cite{MS, ACO}) if $\Gamma_\delta(x)$ can contain at most countably many points and discuss continuum-wise (cw) expansive property (\cite{Ka, ACO}) if the map ``expands'' every non-degenerate subcontinuum. We refer the readers to the cited papers for the exact definitions. In the table below we summarize what is known about expansive properties.
\begin{eqnarray*}  1-expansive \ \ \ \ \ \ \ \ \ \ &\Leftrightarrow& \ \ \ \ \ \ \ \ \ expansive  \\
 \Downarrow \ \ \ \ \ \ \ \ \ \ \ \ \ \ \  &  &  \ \ \ \ \ \ \ \ \ \ \ \ \ \ \ \  \Downarrow   \\
 N-expansive \ \ \ \ \ \ \ \ \ & &   \\
\Downarrow \ \ \ \ \ \ \ \ \ \ \ \ \ \ \ & & strong \ measure \ expansive  \\
N+1-expansive \ \ \ \ \ & & \ \ \ \ \ \ \ \ \ \ \ \ \ \ \ \  \\
 \Downarrow \ \ \ \ \ \ \ \ \ \ \ \ \ \ \ &  & \ \ \ \ \ \ \ \ \ \ \ \ \ \ \ \    \Downarrow \\
countably-expansive \ \ \ & & \ \  \ \ \ \ \ \ measure \ expansive \\
\Downarrow\ \ \ \ \ \ \ \ \ \ \ \ \ \ \ & & \\
cw-expansive\ \ \ \ \ &  &
\end{eqnarray*}
The previous examples show that there are $n$-expansive homeomorphisms with and without the strong  measure expansive  property.

\begin{theorem}\label{ShadStrShad} Let $f$ be a strong measure expansive homeomorphism with the shadowing property. Then $f$ has the strong periodic shadowing property.
\end{theorem}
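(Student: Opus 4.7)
The plan is to strengthen the argument of Theorem \ref{measure1}, using the full force of strong measure expansiveness (not merely measure expansiveness) to force the shadowing point \emph{itself} to be a periodic point of period $N$, rather than just producing some nearby periodic point of unrelated period.

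Fix a strong measure expansiveness constant $\epsilon_0$. By Lemma \ref{measure2} the restriction $f|_{\mathrm{Per}(f)}$ is expansive, and inspection of that proof shows that $\epsilon_0$ itself serves as an expansiveness constant on $\mathrm{Per}(f)$. Given $\epsilon>0$, I may assume $\epsilon<\epsilon_0$. Let $\delta>0$ be supplied by the shadowing property for $\epsilon/2$, let $\{x_i\}_{i\in\mathbb{Z}}$ be a periodic $\delta$-pseudo-orbit of period $N$, and pick $x\in X$ with $d(f^i(x),x_i)<\epsilon/2$ for all $i$. Using $x_{i+N}=x_i$ and the triangle inequality (exactly as in Theorem \ref{measure1}),
\[
d(f^{i+kN}(x),f^i(x)) \;<\; \epsilon \qquad \text{for all } i,k\in\mathbb{Z},
\]
so that $f^{l+kN}(x)\in \Gamma_\epsilon(f^l(x))$ for every $0\le l<N$ and $k\in\mathbb{Z}$. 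Hence the entire $f$-orbit of $x$, and therefore (as each $\Gamma_\epsilon(\cdot)$ is closed) its closure, lies in $A:=\bigcup_{l=0}^{N-1}\Gamma_\epsilon(f^l(x))$.

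Next I take $\mu$ to be a weak-$*$ accumulation point of the empirical averages $\tfrac1M\sum_{n=0}^{M-1}\delta_{f^n(x)}$, so that $\mu$ is $f$-invariant and supported in $A$. Strong measure expansiveness now gives $\mu(\Gamma_\epsilon(f^l(x)))=\mu(\{f^l(x)\})$ for every $l$, whence
\[
1 \;=\; \mu(A) \;\le\; \sum_{l=0}^{N-1}\mu(\{f^l(x)\}),
\]
so $\mu$ has a positive atom on the orbit of $x$. Invariance of $\mu$ together with injectivity of $f$ makes the mass $\mu(\{f^n(x)\})$ constant in $n\in\mathbb{Z}$; since $\mu$ is a probability measure, the orbit of $x$ is necessarily finite, i.e.\ $x$ is periodic with some prime period $p$. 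Finally, $f^N(x)\in \Gamma_\epsilon(x)$ together with $x,f^N(x)\in\mathrm{Per}(f)$ and expansiveness of $f|_{\mathrm{Per}(f)}$ with constant $\epsilon_0>\epsilon$ forces $f^N(x)=x$, so $p\mid N$ and $x\in\mathrm{Per}_N(f)$.

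The main obstacle, which distinguishes this from the proof of Theorem \ref{measure1}, is passing from ``some atom exists in $\bigcup_l\Gamma_\epsilon(f^l(x))$'' (which merely produces \emph{some} periodic point nearby, with no period control) to ``$x$ itself is periodic''; this is precisely where the equality $\mu(\Gamma_\epsilon(y))=\mu(\{y\})$ supplied by strong measure expansiveness — stronger than the conclusion $\mu(\Gamma_\epsilon(y))=0$ for non-atomic $\mu$ given by ordinary measure expansiveness — is indispensable.
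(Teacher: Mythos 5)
Your proof is correct, and it takes a genuinely more direct route than the paper's. The paper first proves Theorem \ref{measure1} (periodic shadowing from mere measure expansiveness): there the empirical-measure argument only yields an atom at \emph{some} periodic point $z$ lying in $\bigcup_{l}\Gamma_\epsilon(f^l(x))$, so the tracing point must be replaced by $f^{-l}(z)$ at the cost of worsening the tracing constant to $2\epsilon$; the proof of Theorem \ref{ShadStrShad} then simply quotes that result to obtain a periodic tracing point $x$ and applies Lemma \ref{measure2} to the pair $x,\,f^N(x)\in\mathrm{Per}(f)$ with $f^N(x)\in\Gamma_\epsilon(x)$ to force $f^N(x)=x$. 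You instead run the empirical-measure argument once, with the stronger identity $\mu(\Gamma_\epsilon(y))=\mu(\{y\})$, which pins the atom onto the orbit of the tracing point $x$ itself; together with the observation that an invariant atom of constant positive mass along an infinite orbit is impossible, this shows $x$ is already periodic, and your final step ($f^N(x)=x$ via Lemma \ref{measure2}, whose proof indeed yields $\epsilon_0$ as an expansiveness constant on $\mathrm{Per}(f)$) coincides with the paper's. What your version buys is a self-contained one-pass proof that does not factor through Theorem \ref{measure1} and keeps the tracing constant at $\epsilon/2$ rather than $2\epsilon$; what the paper's factorization buys is the intermediate statement (periodic shadowing under plain measure expansiveness), which it uses elsewhere. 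The supporting steps you invoke --- $\mu(A)=1$ for the closed set $A=\bigcup_{l=0}^{N-1}\Gamma_\epsilon(f^l(x))$ via the portmanteau inequality for closed sets, and the constancy of $\mu(\{f^n(x)\})$ in $n$ from invariance and injectivity --- all check out.
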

\begin{proof} By Theorem \ref{measure1} $f$ has  periodic shadowing property. Let $\epsilon>0$ be the strong measure expansive constant of $f$ and $\delta>0$ given by the periodic shadowing property for $\epsilon/2$. Let $\{x_i\}_{i\in T}$ a periodic $\delta$-pseudo orbit with period $N$. By periodic weak shadowing there is $x\in Per(f)$ such that $$d(f^i(x),x_i)<\frac{\epsilon}{2}, \ \forall i\in T. $$
Therefore,
\begin{eqnarray*} d(f^i(x),f^i(f^N(x)))&\leq& d(f^i(x),x_i)+d(x_i,f^i(f^N(x))) \\
&=& d(f^i(x),x_i)+d(x_{i+N},f^{i+N}(x)) \\
&<&\frac{\epsilon}{2}+\frac{\epsilon}{2}=\epsilon.
\end{eqnarray*}
Since by Lemma \ref{measure2} $f|Per(f)$ is expansive with the expansive constant equal to $\epsilon$, we have $f^N(x)=x$. It finishes the proof.
\end{proof}

\begin{corollary} Let $f$ be a strong measure expansive homeomorphism. If $f$ has the local weak specification property, then $f$ has the local specification property.
\end{corollary}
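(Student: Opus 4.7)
The plan is to chain together three results already established in the paper, so that the corollary becomes almost immediate. First, since $f$ has the local weak specification property, Theorem \ref{equiv} gives that $f$ has the shadowing property. Second, combining the hypothesis that $f$ is strong measure expansive with the shadowing property just obtained, Theorem \ref{ShadStrShad} yields that $f$ has the strong periodic shadowing property. Third, the Remark following Theorem B observes that strong periodic shadowing implies local specification (with $N = 1$ for every $\epsilon > 0$).

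The only step that is not literally quoted from a numbered statement is the last one, so I would include a brief justification. Given $\epsilon > 0$, let $\delta > 0$ be the constant supplied by the strong periodic shadowing property for $\epsilon$. Let $k \in \mathbb{N}$, $n \geq 1$ and $x_1, \ldots, x_k$ satisfy $d(f^n(x_i), x_{i+1}) < \delta$ with $x_{k+1} = x_1$. I would construct a periodic pseudo orbit $\{y_j\}_{j \in T}$ of period $kn$ by concatenating the orbit segments:
\[
y_{in + \ell} = f^{\ell}(x_{(i \bmod k)+1}), \qquad 0 \leq \ell < n, \ i \in T.
\]
For $0 \leq \ell < n - 1$ one has $f(y_{in+\ell}) = y_{in+\ell+1}$ exactly, while for $\ell = n-1$,
\[
d(f(y_{in+n-1}), y_{in+n}) = d(f^n(x_{(i \bmod k)+1}), x_{((i+1)\bmod k)+1}) < \delta,
\]
so $\{y_j\}$ is a periodic $\delta$-pseudo orbit of period $kn$. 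Strong periodic shadowing then supplies $x \in \mathrm{Per}_{kn}(f)$ with $d(f^j(x), y_j) < \epsilon$ for all $j$, which is exactly the local specification conclusion $d(f^{in + \ell}(x), f^{\ell}(x_{i+1})) < \epsilon$ for $0 \leq i < k$, $0 \leq \ell < n$.

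There is no genuine obstacle to address; the whole content of the corollary is packaged in the two non-trivial inputs Theorem \ref{equiv} and Theorem \ref{ShadStrShad}, and the last bookkeeping step is essentially formal. If anything required care, it would be verifying that the concatenated sequence genuinely has period $kn$ (rather than some divisor) and that the cyclic closure condition $x_{k+1} = x_1$ is used precisely at the join between blocks — both of which are immediate from the construction above.
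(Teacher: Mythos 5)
Your proposal is correct and follows exactly the route the paper intends: Theorem \ref{equiv} (local weak specification $\Rightarrow$ shadowing), then Theorem \ref{ShadStrShad} (strong measure expansive $+$ shadowing $\Rightarrow$ strong periodic shadowing), then the Remark after Theorem B (strong periodic shadowing $\Rightarrow$ local specification), which is the same chain used in the analogous Corollary \ref{prop:3.3} for expansive homeomorphisms. Your explicit verification of the last implication via the concatenated periodic pseudo-orbit is a correct filling-in of what the paper leaves as a remark.
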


\begin{example} (See \cite{CK}) Let $p$ and $q$ be relatively prime integers. Set $r=p+q-1$. Define a shift of finite type $X(p,q)\subset\Omega_r$ by specifying
\begin{eqnarray*}\Im = \{01,12,...,(p-2)(p-1),(p-1)0,0p,p(p+1),..., \\
(p+q-2)(p+q-1),(p+q-1)0\}.
\end{eqnarray*}
In the others words, $X_{(p,q)}$ consists of sequences of vertices visited during  a bi-infinite walk on the directed graph with two loops: one of length $p$ with vertices labeled $0,...,p-1$ and one of length $q$ with vertices labeled $0,p,p+1,...,p+q-2$. Since the graph is connected and has two cycles with relatively prime lengths it presents a topologically mixing shift of finite type. Moreover, this shift of finite type does not have any periodic point with prime period smaller than $\min\{p, q\}$. Let $(p_j)_{j=1}^\infty$ be a strictly increasing sequence of prime numbers. Let $X_n=X(p_n,p_{n-1})$ for $n\in\mathbb{N}$, and $\sigma_n$ be a shift transformation on $\Omega_{p_n+p_{n+1}-1}$ restricted to $X_n$. By \cite{CK} the product system $F=\sigma_1\times\sigma_2\times...$ on $X=\Pi_{i=1}^\infty X_n$ is topological mixing with the shadowing property, but it does not have the periodic shadowing property, since there are no periodic points.
\end{example}

\begin{theoremC}\label{cor:3.4} Let $f:X\rightarrow X$ be a transitive and strong measure expansive homeomorphism. Then the followings are equivalent:
\begin{enumerate}
\item $f$ has the local specification property;
\item $f$ has the local weak specification property;
\item $f$ has the shadowing property;
\item $f$ has the strong periodic shadowing property;
\item $f$ has the periodic shadowing property;
\item $f$ has the special shadowing property.
\end{enumerate}
 \end{theoremC}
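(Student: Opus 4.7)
The plan is to establish the six equivalences as a single cycle of implications, namely
$$(1)\Rightarrow (5)\Rightarrow (6)\Rightarrow (3)\Rightarrow (4)\Rightarrow (1),$$
and to splice $(2)$ into the cycle via the equivalence $(2)\Leftrightarrow (3)$ already proved in Theorem \ref{equiv}. All of the ingredients have been assembled in earlier sections, so the task is to verify that each step is either generic or uses only the two hypotheses (transitivity and strong measure expansiveness) available in Theorem C, and that nothing stronger is required.

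I would begin with $(1)\Rightarrow (5)$, which is exactly Theorem B and requires no additional hypothesis. Next, $(5)\Rightarrow (6)$ is precisely Corollary \ref{lem:3.1}, and this is the single step that uses transitivity. The implication $(6)\Rightarrow (3)$ is immediate since special shadowing contains shadowing by definition. The step $(3)\Rightarrow (4)$ is where strong measure expansiveness is essential, and this is exactly Theorem \ref{ShadStrShad}. Finally, $(4)\Rightarrow (1)$ is the Remark following Theorem B, noting that strong periodic shadowing trivially witnesses local specification with $N=1$ for every $\epsilon>0$. The equivalence $(2)\Leftrightarrow (3)$ is Theorem \ref{equiv} and requires nothing beyond compactness of $X$, so once $(3)$ is in the cycle, so is $(2)$.

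There is no real obstacle, since the nontrivial content has already been carried out: the critical input is Theorem \ref{ShadStrShad}, which upgrades shadowing to strong periodic shadowing under strong measure expansiveness by passing through Lemma \ref{measure2} (the fact that $f|\mathrm{Per}(f)$ is expansive). The only delicacy in the write-up is bookkeeping, namely to record that transitivity is invoked only in the step $(5)\Rightarrow (6)$, and that strong measure expansiveness (rather than the weaker notion of measure expansiveness) is invoked only in the step $(3)\Rightarrow (4)$, so that the cycle is consistent with the precise hypotheses of Theorem C and cannot be weakened merely by reorganizing the order of the implications.
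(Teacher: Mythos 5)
Your proposal is correct and uses exactly the same ingredients as the paper's proof (Theorem B, Corollary \ref{lem:3.1} via Proposition \ref{porp:3.1} for the transitivity step, Theorem \ref{equiv}, Theorem \ref{ShadStrShad}, and the Remark after Theorem B); the only difference is cosmetic, in that the paper runs the cycle $(1)\Rightarrow(5)\Rightarrow(2)\Rightarrow(3)\Rightarrow(4)\Rightarrow(1)$ and attaches $(6)$ via $(5)\Leftrightarrow(6)$, whereas you run $(1)\Rightarrow(5)\Rightarrow(6)\Rightarrow(3)\Rightarrow(4)\Rightarrow(1)$ and attach $(2)$ via $(2)\Leftrightarrow(3)$. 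Your bookkeeping of where transitivity and strong measure expansiveness are used is accurate.
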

\begin{proof} By Theorem B, $(1)$ implies $(5)$. Since $f$ is transitive, by Proposition \ref{porp:3.1}, $(5)$ implies $(2)$. By Theorem \ref{equiv}, $(2)$ implies $(3)$. Since $f$ is strong measure expansive $(3)$ implies $(4)$, by Theorem \ref{ShadStrShad}. By definition, $(1)$ implies $(5)$ (with $N=1$ for every $\epsilon>0$). Finally by Lemma \ref{lem:3.1}, $(5)$ is equivalent to $(6)$.
\end{proof}

\section{Spectral Theorem}

We recall the definition of the {chain recurrent set} of $f$. For $x,y\in X$ and $\delta>0$, $x\stackrel{\delta}{\rightleftharpoons} y$ if there are $\delta$-pseudo orbits such that $x=x_0,x_1,...,x_l=y$ and $y=y_0,y_1,...,y_r=x$. If $x\stackrel{\delta}{\rightleftharpoons} y$ for every $\delta>0$, then $x\rightleftharpoons y$. The set $$CR(f)=\{x\in X;x\rightleftharpoons x\}$$ is the \emph{chain recurrent set} of $f$. It is well known  that $CR(f)$ is closed and if $f$ has the shadowing property then $CR(f)=\Omega(f)$ (see \cite{Ao-Hi}, [Lemma 3.1.1 and Theorem 3.1.2]).

It is well known that if $f:X\rightarrow X$ is an expansive homeomorphism defined on a compact metric space then there is $\epsilon>0$ such that for each $x\in X$ the local stable set $W^s_\epsilon(x,f)$ is a subset of the stable set $W^s(x,f)$. By Lemma \ref{measure2} $f|Per(f)$ is expansive, but since $Per(f)$ in general is not closed, in principle, expansiveness of $f|Per(f)$ does not imply the existence of $\epsilon>0$ such that if $x\in Per(f)$, then $W^s_\epsilon(x,f|Per(f))\subset W^s(x,f|Per(f))$. Note that since $W^s_\epsilon(x,f|Per(f))\subset W^s_\epsilon(x,f)$ the next theorem shows  a stronger property for the existence of a such $\epsilon$ for $f|Per(f)$.

\begin{theorem}\label{stableset} Let $f$ be a strong measure expansive homeomorphism with shadowing. There is $\epsilon>0$ such that if $p\in Per(f)$, then $W^s_\epsilon(p,f)\subset W^s(p,f)$ and $W^u_\epsilon(p,f)\subset W^u(p,f)$.
\end{theorem}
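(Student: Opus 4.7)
The plan is to extract from strong measure expansiveness, applied to the uniform invariant probability on the orbit of $p$, a strong rigidity statement on $\Gamma_{\delta_0}$-sets around periodic points, and then to conclude via a compactness/limit argument of the same flavor as the classical one for expansive homeomorphisms.

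First I would establish the rigidity: denoting by $\delta_0$ the strong measure expansive constant and by $n$ the prime period of a periodic point $p$, the claim is that $\Gamma_{\delta_0}(f^r(p))=\{f^r(p)\}$ for every $r$. The idea is to take the invariant probability $\mu_p=\frac{1}{n}\sum_{j=0}^{n-1}\delta_{f^j(p)}$ and to interpret the strong measure expansive identity $\mu_p(\Gamma_{\delta_0}(x))=\mu_p(\{x\})$ as a counting statement: the number of $j\in\{0,\dots,n-1\}$ with $f^j(p)\in\Gamma_{\delta_0}(x)$ equals the number of $j$ with $f^j(p)=x$. Choosing $x$ outside $\mathrm{orbit}(p)$ shows that no $f^j(p)$ lies in $\Gamma_{\delta_0}(x)$, which combined with the symmetry $y\in\Gamma_{\delta_0}(z)\Leftrightarrow z\in\Gamma_{\delta_0}(y)$ yields $\Gamma_{\delta_0}(f^j(p))\subset\mathrm{orbit}(p)$; then choosing $x=p$ pins down $\Gamma_{\delta_0}(p)\cap\mathrm{orbit}(p)=\{p\}$, and the claim follows.

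With this rigidity in hand, I would set $\epsilon=\delta_0/2$ and prove $W^s_\epsilon(p)\subset W^s(p)$ by contradiction. If $y\in W^s_\epsilon(p)$ but $y\notin W^s(p)$, there exist $\eta>0$ and $i_k\to\infty$ with $d(f^{i_k}(y),f^{i_k}(p))\geq\eta$. Passing to a subsequence on which $i_k\equiv r\pmod n$ and $f^{i_k}(y)$ converges to some $z$, one has $z\neq f^r(p)$; continuity of $f^m$ propagates the bound $d(f^{i_k+m}(y),f^{i_k+m}(p))\leq\epsilon$ through the limit for every fixed $m\in\mathbb Z$, yielding $d(f^m(z),f^{r+m}(p))\leq\epsilon$ for all $m$, and hence $f^{-r}(z)\in\Gamma_\epsilon(p)\subset\Gamma_{\delta_0}(p)=\{p\}$, forcing $z=f^r(p)$, a contradiction. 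The unstable containment is obtained by applying the same reasoning to $f^{-1}$, which is strong measure expansive with the same constant.

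The main obstacle is the first step: strong measure expansiveness is a statement about every invariant measure, and one has to notice that feeding just the uniform measure on the orbit of $p$ through every choice of $x\in X$ is enough to collapse $\Gamma_{\delta_0}(q)$ to $\{q\}$ for periodic $q$, a conclusion sharper than Lemma \ref{measure2}. Once this is in hand, the remainder is a routine compactness argument in the style of the classical result that local stable sets of expansive homeomorphisms lie in global stable sets; in particular, the shadowing hypothesis does not actually appear to be used for this theorem, although it enters decisively elsewhere in Section 5.
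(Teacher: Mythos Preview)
Your proof is correct and follows essentially the same route as the paper: a compactness/limit argument that produces a point $z\neq f^r(p)$ in $\Gamma_\delta(f^r(p))$, contradicting strong measure expansiveness applied to the uniform measure on the orbit of $p$. The only organizational difference is that you first isolate the rigidity statement $\Gamma_{\delta_0}(q)=\{q\}$ for periodic $q$ (a clean sharpening of Lemma~\ref{measure2}) and then feed the limit point into it, whereas the paper reaches the contradiction directly at the limit without stating that lemma separately; your observation that the shadowing hypothesis is not actually used in this proof is also correct.
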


\begin{proof}  Let $\epsilon=\delta/2$, where $\delta>0$ is the strong measure expansive constant of $f$. Assume that $y\in W^s_\epsilon(p)$, but $y\notin W^s(p)$ for some $p\in Per(f)$. Hence, there are a sequence of natural numbers $\{m_n\}_{n\in\mathbb{N}}$ and $r>0$ such that $m_n\rightarrow\infty$ and $$r<d(f^{m_n}(y),f^{m_n}(p))<\epsilon,$$
for each $n\in\mathbb{N}$. Using subsequences if necessary, we can assume that $f^{m_n}(y)\rightarrow y_0$ and $f^{m_n}(p)\rightarrow p_0$. Since $p$ is periodic, we have that $p_0=f^k(p)$ for some $k\in\mathbb N$. Then
$$d(p_0,y_0)=\lim_{n\rightarrow\infty} d(f^{m_n}(y),f^{m_n}(p))\geq r>0,$$
and $p_0\neq y_0$. We claim that $p_0\in\Gamma_\delta(y_0)$. Fix $i\in\mathbb{Z}$ and let $N\in\mathbb{N}$ be such that $m_N+i>0$. Since $y\in W^s_\epsilon(p)$,
\begin{eqnarray*} d(f^i(p_0),f^i(y_0))&=&\lim_{N\rightarrow\infty} d(f^i(f^{m_N}(y)),f^i(f^{m_N}(p))) \\
&=&\lim_{N\rightarrow\infty} d(f^{i+m_N}(y),f^{i+m_N}(p)) \\
&\leq& \epsilon=\delta/2<\delta.
\end{eqnarray*}
This concludes the proof of the claim. Let $\mu$ be the invariant probability measure supported on the orbit of $p_0$. Since $p_0$ is periodic $\mu(p_0)>0$. Hence,
$$\mu(\{y_0\})<\mu(\{y_0\})+\mu(\{p_0\})\leq\mu(\Gamma_\delta(y_0)).$$
But $\mu(\Gamma_\delta(y_0))=\mu(\{y_0\})$ by choice of $\delta$. This contradiction completes the proof.
\end{proof}

The proof of the next theorem is similar to the classical proof of the spectral decomposition theorem for an expansive homeomorphism. The main difference is that we employ the previous theorem when dealing with periodic points.


\begin{theoremD}\label{specdec} Let $f$ be a strong measure expansive homeomorphism with shadowing. Then,
\begin{enumerate}
\item $\Omega(f)=\bigcup_{i=1}^lB_i$, where each $B_i$ is invariant, closed and $f|B_i$ is transitive.
\item For each $0\leq k\leq l$ $B_k=\bigcup_{i=0}^{a_k-1}C_i$, where $C_i\cap C_j=\emptyset$ if $i\neq j$, and for each $0\leq i<a_k$, $C_i$ is closed, $f(C_i)=C_{i+1}$, $f^{a_k}(C_i)=C_i$ and $f^{a_k}|C_i$ is topological mixing.
\end{enumerate}
\end{theoremD}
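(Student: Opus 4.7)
The plan is to adapt the classical Smale spectral decomposition argument, replacing the uniform local product structure given by expansiveness with the structure available at periodic points via Theorem \ref{stableset}. From Theorems \ref{measure1} and \ref{ShadStrShad} the hypotheses yield strong periodic shadowing, and Theorem \ref{stableset} produces $\epsilon_0 > 0$ such that $W^s_{\epsilon_0}(p,f) \subset W^s(p,f)$ and $W^u_{\epsilon_0}(p,f) \subset W^u(p,f)$ for every $p \in \mbox{Per}(f)$. These are the two tools that substitute for expansiveness in what follows.

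I would first establish $\Omega(f) = \overline{\mbox{Per}(f)}$. Shadowing implies $CR(f) = \Omega(f)$ (\cite{Ao-Hi}), so every $x \in \Omega(f)$ lies on arbitrarily fine periodic $\delta$-pseudo orbits, which strong periodic shadowing traces by genuine periodic points arbitrarily close to $x$. Next I would define on $\mbox{Per}(f)$ the relation $p \sim q$ requiring that the orbits of $p$ and $q$ be heteroclinically connected in both directions, i.e.\ $W^u(p,f) \cap W^s(q,f) \neq \emptyset$ and $W^s(p,f) \cap W^u(q,f) \neq \emptyset$. Reflexivity and symmetry are immediate; transitivity is obtained by concatenating the two chains of heteroclinic segments into a pseudo orbit and shadowing it, where Theorem \ref{stableset} is invoked to upgrade an orbit that stays $\epsilon_0$-close to a periodic orbit in forward (resp.\ backward) time to one that actually converges to it.

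The main obstacle is showing that only finitely many equivalence classes exist and that the closure of each is open in $\Omega(f)$. I would argue that whenever $p, q \in \mbox{Per}(f)$ and the orbit of $q$ enters $B(p, \epsilon_0/2)$, shadowing combined with Theorem \ref{stableset} produces heteroclinic points between their orbits, so $q \sim p$; hence each class contains a relative neighborhood of its periodic orbits in $\mbox{Per}(f)$. Density of $\mbox{Per}(f)$ in $\Omega(f)$ then makes the closures open in $\Omega(f)$, and compactness yields a finite partition into closed invariant sets $B_1, \ldots, B_l$. Transitivity of $f|_{B_i}$ follows by shadowing a pseudo orbit that cycles through a countable dense subset of $\mbox{Per}(f) \cap B_i$ along heteroclinic chains within the equivalence class.

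For (2), I would fix $B = B_k$ and let $a_k$ be the greatest common divisor of prime periods of points in $\mbox{Per}(f) \cap B$. Fixing a base point $p_0 \in \mbox{Per}(f) \cap B$, I would partition $B$ into pieces $C_0, \ldots, C_{a_k-1}$ by assigning to each $x \in B$ the residue $i \pmod{a_k}$ obtained by following any heteroclinic chain from $p_0$ to (a periodic point approximating) $x$, which is well-defined because two different chains would produce a periodic $f^{a_k}$-orbit whose period-class obstruction is $a_k$. Theorem \ref{stableset} guarantees that this phase is locally constant on a neighborhood of each periodic point, making each $C_i$ clopen in $B$; by construction $f(C_i) = C_{i+1 \bmod a_k}$. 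Topological mixing of $f^{a_k}|_{C_i}$ then follows from transitivity of $f|_B$ together with the fact that $a_k$ absorbs all periodic phase obstructions, via the standard shadowing-based argument.
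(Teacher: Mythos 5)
Your Part 1 is a legitimate alternative to what the paper does: you build the basic sets as closures of heteroclinic equivalence classes of periodic points, whereas the paper takes the $B_i$ to be the equivalence classes of the chain relation $\rightleftharpoons$ on $CR(f)=\Omega(f)$ and then shows each class is open by proving that any periodic point $\delta/2$-close to $B_i$ is chain-equivalent to a point of $B_i$ (constructing the connecting pseudo orbits through a point of $W^u_\epsilon(p)\cap W^s_\epsilon(q)\subset W^u(p)\cap W^s(q)$ via Theorem \ref{stableset}). Your uniform-radius claim ($q\sim p$ whenever the orbit of $q$ meets $B(p,\epsilon_0/2)$) is exactly the right substitute and, combined with density of $\mbox{Per}(f)$ in $\Omega(f)$, does give disjoint open-closed classes and finiteness. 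So far, so good.

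Part 2 has a genuine gap. You define the phase of $x$ as ``the residue $i\pmod{a_k}$ obtained by following any heteroclinic chain from $p_0$ to a periodic point approximating $x$,'' but a heteroclinic connection carries no intrinsic time index: a point of $W^u(p_0)\cap W^s(q)$ does not determine an integer, so the residue you want to assign is simply not defined by the data you invoke. To make this work you must replace heteroclinic chains by \emph{finite pseudo orbits} and use their lengths; well-definedness mod $a_k$ then follows because two chains $p_0\to q$ of lengths $L_1,L_2$ concatenated with a return chain of length $L_3$ give periodic pseudo orbits of periods $L_1+L_3$ and $L_2+L_3$, which strong periodic shadowing (Theorem \ref{ShadStrShad}) traces by genuine periodic points, forcing $a_k\mid L_1+L_3$ and $a_k\mid L_2+L_3$. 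That repair is available but absent from your write-up. More seriously, the final clause ``topological mixing \ldots follows \ldots via the standard shadowing-based argument'' defers precisely the hardest point: one must show that \emph{every} sufficiently large multiple of $a_k$ occurs as a transition time between two given open sets of $C_0$, and since two particular periodic points $q_1\in U$, $q_2\in V$ may have $\gcd$ of periods strictly larger than $a_k$, padding a single connecting chain by their periods does not suffice --- one has to route through auxiliary periodic points realizing the full $\gcd$. This is the content of the theorem, not a routine afterthought. The paper avoids the phase bookkeeping entirely by setting $C_p=\overline{W^s(p)\cap B}$ for a periodic $p$ and proving directly (in five steps, each using shadowing plus Theorem \ref{stableset} to upgrade $W^s_\epsilon$ to $W^s$) that these sets are open, pairwise equal or disjoint, cyclically permuted by $f$, and that $f^M|_{C_p}$ is mixing by an explicit estimate $d(f^{kM}(x_j),q)<\delta$ valid for all large $k$; you would need to supply an argument of comparable substance for your $C_i$'s.
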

\begin{proof}\footnote{We thank Ngoc-Thach Nguyen for pointing out a flaw in the original proof to us.} By \cite[Theorem 3.1.2]{Ao-Hi} $\Omega(f)=CR(f)$, therefore $\Omega(f)=\bigcup_{i\in\Lambda} B_i$, where $B_i$ are the equivalence classes of the relation $\rightleftharpoons$ on $CR(f)$. Let $\delta>0$ be given by the shadowing property for $\epsilon=\frac{1}{2}\epsilon_1>0$, where $\epsilon_1>0$ is given by Theorem \ref{stableset}. Fix $i$ and define $U_{\delta/2}(B_i)$ the $\delta/2$ open neighborhood of $B_i$ on $\Omega(f)$. If $p\in U_{\delta/2}(B_i)\cap Per(f)$, then there is $y\in B_i$ such that $d(y,p)<\delta/2$. Let $\lambda>0$ be given. We will prove that there is a $\lambda$-pseudo orbit from $p$ to $y$. By Theorem 4.1, the system satisfies periodic shadowing property, hence $\overline{Per(f)}=CR(f)$. So there is a periodic point $q$ with $$d(y,q)<\gamma=\frac{1}{2}\min\{\delta,\lambda\}.$$ Define a $\gamma$-pseudo orbit $\{x_i\}_{i\in\mathbb{Z}}$ by $x_i=f^i(p)$, if $i<0$, and $x_i=f^i(q)$, if $i\geq 0$. By shadowing property and Theorem $5.1$ there is
$$x\in W^u_\epsilon(p)\cap W^s_\epsilon(q)\subset W^u(p)\cap W^s(q). $$
Hence, if $f^M(p)=p$ and $f^N(q)=q$ there is $k>0$ such that 
$$d(f^{-kM+1}(p),f^{-kM+1}(x))<\lambda$$ and $$d(f^{kN}(q),f^{kN}(x))<\lambda/2.$$ 

Take $\{y_i\}_{i=0}^{k(M+N)}$, defined by $y_0=p=f^{-kM}(p)$, $y_i=f^{-kM+i}(x)$ if $1\leq i< kM+kN$, and $y_{k(M+N)}=y$. Then 
$$d(f(y_0),y_1)=d(f^{-kM+1}(p),f^{-kM+1}(x))<\lambda$$   
and
\begin{eqnarray*} 
d(f(y_{k(M+N)-1}),y_{k(M+N)})&=&d(f^{kN}(x),y) \\
&\leq&d(f^{kN}(x),f^{kN}(q))+d(f^{kN}(q),y) \\
&=&d(f^{kN}(x),f^{kN}(q))+d(q,y)<\frac{\lambda}{2}+\frac{\lambda}{2}=\lambda. 
\end{eqnarray*}
Therefore, $\{y_i\}_{i=0}^{k(M+N)}$ is a $\lambda$-pseudo orbit from $p$ to $y$. With a similar argument we can find a $\lambda$-pseudo orbit from $y$ to $p$, and thus $p\stackrel{\lambda}{\rightleftharpoons} y$. Since $\lambda>0$ is arbitrary, we have $p\rightleftharpoons y$, which implies $p\in B_i.$
Hence
$$B_i\supset \overline{U_{\delta/2}(B_i)\cap Per(f)} \supset U_{\delta/2}(B_i)\cap \overline{Per(f)}=U_{\delta/2}(B_i).$$So $B_i$ is open. Since $\Omega=\bigcup_{i\in\Lambda}B_i$ is compact,  $\Lambda$ is finite.

Now, if $U$ and $V$ are open sets in $B_i$, take $x$, $y$ and $r>0$ such that $B_{r}(x)\subset U$ and $B_{r}(y)\subset V$. Let $\delta_2>0$ be given by the shadowing property for $r$. Since $x\rightleftharpoons y$ there is  a finite sequence $x=z_1,z_2,...,z_l=y$ of points such that $d(f(z_i),z_{i+1})<\delta_2$, $0\leq i\leq l-1$. By shadowing there is $z\in B_r(x)$ such that $f^l(z)\in B_r(y)$, hence $f^l(U)\cap V\neq\emptyset$. Then $f|B_i$ is transitive, which finishes the proof of the first part.

To check the second part, fix $B=B_k$, and take a periodic point $p\in B$ with period $m$ and define $C_p=\overline{W^s(p)\cap B}$. Let $\delta>0$ be given by the shadowing property for $\epsilon=\frac{1}{2}\epsilon_1>0$, where $\epsilon_1>0$ is given by Theorem \ref{stableset}. We assume that $\delta<\epsilon/2$.
\begin{enumerate}
\item $C_p$ is open: Take a periodic point $q\in U_{\delta/4}(C_p)\cap B$ with period $n$. So there is $x\in W^s(p)\cap B$ such that $d(x,q)<\delta/2$. Define a $\delta$-pseudo orbit $\{y_i\}_{i\in\mathbb{Z}}$ by $y_i=f^i(q)$ if $i<0$, and $y_i=f^i(x)$ if $i\geq 0$. By shadowing there is $y\in W^u_{\epsilon/2}(q)\cap W^s_{\epsilon/2}(x)$. There is $N\in\mathbb{N}$ such that if $i\geq N$, then $d(f^i(x),f^i(p))<\epsilon/2$. Therefore if $mi>N$ we have
\begin{eqnarray*}d(f^j(f^{mi}(y)),f^j(p))&=&d(f^{j+mi}(y),f^{j+mi}(p)) \\
&\leq& d(f^{j+mi}(y),f^{j+mi}(x))+d(f^{j+mi}(x),f^{j+mi}(p)) \\
&<&\frac{\epsilon}{2}+\frac{\epsilon}{2}=\epsilon.
\end{eqnarray*}
Since $p$ and $q$ are periodic points, we have $f^{mi}(y)\in W^s_\epsilon(p)\subset W^s(p)$ and $y\in W^u_\epsilon(q)\subset W^u(q)$. Therefore, for every $k\in\mathbb{N}$, we have $f^{-m(nk)}(y)\in W^s(p)$. Since $y\in W^u_\epsilon(q)$ and $q$ has period $n$, we have $$d(f^{-m(nk)}(y),q)=d(f^{-m(nk)}(y),f^{-m(nk)}(q))\rightarrow 0.$$
Hence $q\in C_p$ and this shows that $C_p$ is open.
\item If $q$ is a periodic point such that $q\in C_p$, then $C_q\subset C_p$: Assume that the period of $p$ is $m$ and the period of $q$ is $n$. Since $q\in C_p$ there is a sequence $\{y_i\}_{i\in\mathbb{N}}$ such that $y_i\in W^s(p)\cap B$ and $y_n\rightarrow q$. Fix $x\in W^s(q)\cap B$ and let $M\in\mathbb{N}$ be such that if $i\geq M$, then $d(q,y_i)<\epsilon$. For $i\geq M$ define $\epsilon_i=\epsilon/(i+1)$ and let $\delta_i>0$ be given by the shadowing property for $\epsilon_i$. Take $L_i\in\mathbb{N}$ such that $d(y_{L_i},q)<\delta_i/2$ and $$d(f^{n(mL_i)}(x),q)<\frac{\delta_i}{2}.$$
Define a $\delta_i$-pseudo orbit by $z_j=f^j(x)$ if $j<nmL_i$, and $z_j=f^{j-nmL_i}(y_{L_i})$ if $j\geq nmL_i$. Therefore there is $x_i\in B$ such that $$d(f^j(x_i),z_j)<\epsilon_i,$$ for every $j\in\mathbb{Z}$. Thus
$d(x_i,x)=d(x_i,z_0)<\epsilon_i$,
and $x_i\rightarrow x$, when $i\rightarrow\infty$. On the other hand, we have $x_i\in W^s(p)$. Indeed, if $i\in\mathbb{N}$ is fixed, choose $N\in\mathbb{N}$ such that if $j\geq N$, then $d(f^j(y_i),f^j(p))<\epsilon/2$. Hence if $j\geq N$,
\begin{eqnarray*} d(f^j(f^{mnL_i}(x_i)),f^j(p))&\leq&d(f^j(f^{mnL_i}(x_i)),f^j(y_i))+d(f^j(y_i),f^j(p)) \\
&=&d(f^{j+mnL_i}(x_i),z_{j+mnL_i})+d(f^j(y_i),f^j(p)) \\
&<&\epsilon_i+\frac{\epsilon}{2}\leq\epsilon,
\end{eqnarray*}
and thus $f^{j+mnL_i}(x_i)\in W^s_\epsilon(f^N(p))$. Since $p$ is periodic with period $m$, by the choice of $\epsilon$ we have $f^{j+mnL_i}(x_i)\in W^s(f^N(p))$, hence $x_i\in W^s(f^{-m(nL_i)}(p))=W^s(p)$ and $x\in C_p$.
\item If $q$ is a periodic point such that $q\in C_p$, then $C_p=C_q$: By the previous claim 2. it suffices to prove that $p\in C_q$. If this is not the case we let
$$d=\mbox{\rm dist}(K,C_q),$$
where $K=C_p\setminus C_q$. Since $C_p$ and $C_q$ are compact and open sets, $K$ is compact and $d>0$. Since $q\in C_p$ there is $x\in W^s(p)$ such that $$d(x,q)<d/2.$$
Then $x\in C_q$ and therefore $f^{nmk}(x)\in C_q$, for every $k\in\mathbb{N}$. But
$$d(f^{nmk}(x),p)=d(f^{nmk}(x),f^{nmk}(p))\rightarrow 0.$$
Since $C_q$ is closed $p\in C_q$. This contradiction finishes the proof.
\item If $p$ and $q$ are periodic points with $C_p\cap C_q\neq\emptyset$, then $C_p=C_q$: Fix $x\in C_p\cap C_q$. Let $\delta>0$ be given by the shadowing property for $\epsilon/2$. Since $x\in B\subset \overline{Per(f)}$ there is a periodic point $z$ with period $l$ such that $d(x,z)<\delta/2$, and since $x\in C_p$ there is $y\in W^s(p)$ such that $d(x,y)<\delta/2$. Define a $\delta$-pseudo orbit $x_i=f^i(z)$ if $i<0$, and $x_i=f^i(y)$ if $i\geq 0$. By shadowing there is $w$ such that $$d(f^i(w),x_i)<\epsilon/2,$$ for every $i\in\mathbb{Z}$. Since $y\in W^s(p)$ there is $N\in\mathbb{N}$ such that if $i\geq mN$, then $d(f^i(w),f^i(p))<\epsilon$, and thus $f^{Nm}(w)\in W^s_\epsilon(p)\subset W^s(p)$.  Since $m$ is the period of $p$, we have $f^{-(kl)m}(w)\in W^s(p)$, for every $k\in\mathbb{N}$. On the other hand,
\begin{eqnarray*} d(f^{-i}(w),f^{-i}z)&=&d(f^{-i}(w),x_{-i})<\frac{\epsilon}{2},
\end{eqnarray*}
for every $i\in\mathbb{N}$. Thus $w\in W^u_\epsilon(z)\subset W^u(z)$. Then
\begin{eqnarray*} d(f^{-klm}(w),z)&=&d(f^{-klm}(w),f^{-klm}(z))\rightarrow 0.
\end{eqnarray*}
Hence $z\in C_p$. By the previous claim 3, it follows that $C_z=C_p$. Replacing $p$ by $q$ in the foregoing argument shows that $C_z=C_q$. Therefore $C_p=C_z=C_q$.
\item There is $M\in\mathbb{N}$ such that $B=\bigcup_{i=0}^{M-1}C_{f^{i}(p)}$, and $f^M|_{C_p}$ is topological mixing: Since the period of $p$ is equal $m$ there is a smallest $M\leq m$ such that $C_{f^M(f^i(p))}=C_{f^i(p)}$, for every $i\in\mathbb{N}$. Clearly $\bigcup_{i=0}^{M-1}C_{f^{i}(p)}\subset B$. Let $x\in B\setminus \bigcup_{i=0}^{M-1}C_{f^{i}(p)}$. Since $B$ is open and $\bigcup_{i=0}^{M-1}C_{f^{i}(p)}$ is closed, there is an open set $U\subset B$ such that $x\in U$ and $U\cap \bigcup_{i=0}^{M-1}C_{f^{i}(p)}=\emptyset$. Since $f|_B$ is topologically transitive there is $z\in C_p$ and $l\in\mathbb{N}$ such that $f^l(z)\in U$. This contradiction shows that $B=\bigcup_{i=0}^{M-1}C_{f^{i}(p)}$. Now, let $U$ and $V$ open sets in $C_p$, and choose  a periodic point $q\in V$ with period $n$. Therefore $n=Mr$ for some $r\in\mathbb{N}$ and there is $\delta>0$ such that $B_\delta(q)\subset V$. Since $C_{f^j(q)}=C_{f^j(p)}$, for every $0\leq j<n$ there is a point $x_j\in U\cap W^s(f^{Mj}(q))$. Hence there is $N_j\in\mathbb{N}$ such that if $i\geq N_j$, then
$$d(f^i(x_j),f^i(f^{Mj}(q)))<\delta.$$
Thus if $i>0$ is such that $Mni+Mn-Mj\geq N_j$, then
\begin{eqnarray*}d(f^{Mni+Mn-Mj}(x_j),q)&=&d(f^{Mni+Mn-Mj}(x_j),f^{Mni+Mn}(q)) \\
&=&d(f^{Mni+Mn-Mj}(x_j),f^{Mni+Mn-Mj}(f^{Mj}(q)))\\
&<&\delta.
\end{eqnarray*}
Define, $N$ to be the least integer greater or equal to
$$\max\left\{\frac{N_j}{M}+j;0\leq j<n\right\}.$$
Then for $k =n(i+1)-j\ge N$, we have  $kM= Mni+Mn-Mj \ge N_j$ and hence
$$d(f^{kM}(x_j),q)=d(f^{Mni+Mn-Mj}(x_j),q)<\delta.$$
Hence $f^{kM}(U)\cap V\neq\emptyset$, for every $k\geq N$. This shows that $f^M|_{C_p}$ is topologically mixing.
 \end{enumerate}
\end{proof}

\textbf{Acknowledgments:} The research of W.C.  was supported by CAPES and CNPq, and of M.D. by Johann-Gottfried-Herder Foundation.

\end{document}